\definecolor{vert}{RGB}{15,120,5}
\definecolor{gris}{RGB}{128,128,128}
\definecolor{bleu}{RGB}{0,50,150}
\definecolor{rouge}{RGB}{149,24,24}
\crefname{equation}{}{}
\title{Artin motives in relative Nori and Voevodsky motives.}
\author{Swann Tubach}
\date{}
\theoremstyle{plain}
\newtheorem{thm}{Theorem}[subsection]
\newtheorem{prop}[thm]{Proposition}
\newtheorem*{thm*}{Theorem}
\newtheorem*{prop*}{Proposition}
\newtheorem{nota}[thm]{Notation}
\newtheorem{lem}[thm]{Lemma}
\newtheorem{cor}[thm]{Corollary} 
\theoremstyle{definition}
\newtheorem{defi}[thm]{Definition}
\newtheorem*{defi*}{Definition}
\newtheorem{constr}[thm]{Construction}
\theoremstyle{remark}
\newtheorem{rem}[thm]{Remark}
\newtheorem*{rem*}{Remark}
\numberwithin{equation}{thm}
\newcommand{\C}{\mathbb{C}}
\newcommand{\Q}{\mathbb{Q}}
\newcommand{\Z}{\mathbb{Z}}
\newcommand{\N}{\mathbb{N}}
\newcommand{\Spec}{\mathrm{Spec}}
\newcommand{\A}{\mathbb{A}}
\newcommand{\HH}{\mathrm{H}}
\newcommand{\acal}{\mathcal{A}}
\newcommand{\ccal}{\mathcal{C}}
\newcommand{\mcal}{\mathcal{M}}
\newcommand{\Dd}{\mathrm{D}}
\newcommand{\colim}{\mathrm{colim}}
\newcommand{\Hom}{\mathrm{Hom}}
\newcommand{\map}[1]{\overset{#1}{\to}}
\newcommand{\Mc}{{\mathcal{M}_{\mathrm{ord}}}}
\newcommand{\Mp}{{\mathcal{M}_{\mathrm{perv}}}}
\DeclareMathOperator{\sHom}{\mathscr{H}\text{\kern -3pt {\calligra\large om}}\,}
\newcommand{\Sch}{\mathrm{Sch}}
\newcommand{\op}{\mathrm{op}}
\newcommand{\catinfty}{\mathrm{Cat}_\infty}
\newcommand{\subjclass}[2][2020]{%
  \let\@oldtitle\@title%
  \gdef\@title{\@oldtitle\footnotetext{#1 \emph{Mathematics subject classification.} #2}}%
}
\newcommand{\keywords}[1]{%
  \let\@@oldtitle\@title%
  \gdef\@title{\@@oldtitle\footnotetext{\emph{Key words and phrases.} #1.}}%
}
\begin{document}
\subjclass[2020]{14F42, 14F08}
\keywords{Artin motives, Nori motives, Voevodsky motives}
\maketitle

\begin{abstract}
Over a scheme of finite type over a field of characteristic zero, we prove that Nori an Voevodsky categories of relative Artin motives, that is the full subcategories generated by the motives of étale morphisms in relative Nori and Voevodsky motives, are canonically equivalent. As an application, we show that over a normal base of characteristic zero an Artin motive is dualisable if and only if it lies in the thick category spanned by the motives of finite étale schemes. We finish with an application to motivic Galois groups and obtain an analogue of the classical exact sequence of étale fundamental groups relating a variety over a field and its base change to the algebraic closure. 
\end{abstract}
\tableofcontents
\subsection{Introduction.}
Let $k$ be a field of characteristic zero. As shown by Orgogozo in \cite{MR2102056} following Voevodsky \cite{MR1764197}, the category $\mcal^0(k)$ of \emph{Artin motives} over $k$, also known as Artin representations with rational coefficients of the absolute Galois group $\mathrm{Gal}(\overline{k}/k)$ of $k$ form a semi-simple abelian category which fits naturally as the heart of a \emph{motivic t-structure} on a full subcategory $\mathrm{DM}^0_c(k)$ of the triangulated category of Voevodsky motives $\mathrm{DM}_\mathrm{gm}(k)$ over a field. Moreover it is not hard to see that that $\mcal^0(k)$ embeds fully faithfully in the abelian category $\mcal(k)$ of \emph{Nori motives}, as shown by Huber and Müller-Stachs in \cite[Section 9.4]{MR3618276} or \cite[Section 4]{MR3302623}. By the work of Arapura (see \cite[Section 10.2.2]{MR3618276}) the abelian category $\mcal(k)$ admits a good theory of weights, and in particular its full subcategory of pure objects is semi-simple, and stable under extensions and subquotients in $\mcal(k)$. Because every Artin motive is of weight zero (indeed all of its objects are subquotients of $\HH^0(X,\Q)$ with $X$ finite étale over $k$), we see for all $i>0$ the group 
\[\Hom_{\Dd^b(\mcal(k))}(M,N[i])=0\] vanishes if $M$ and $N$ are Artin motives. In particular the canonical functor 
\[\Dd^b(\mcal^0(k))\to \Dd^b(\mcal(k))\] is an equivalence, and we can identify its image as the thick full subcategory generated\footnote{That is, the smallest $\infty$-category that contains the given generators and is stable under finite colimits, finite limits and direct factors.} by the $f_*\Q_X\simeq \HH^0(X,\Q)$ for $f\colon X\to\Spec(k)$ finite and étale. Here, we used the pushforward constructed by Ivorra and Morel in \cite{ivorraFourOperationsPerverse2022}. The purpose of this note is to tell a similar story for \emph{relative Nori motives}, as constructed by Ivorra and Morel.\\

Let $X$ be a finite type $k$-scheme\footnote{In fact one could take any finite dimensional qcqs scheme of characteristic zero, because the derived category of perverse Nori motives and all considered full subcategories afford continuity thus all statements that hold for varieties over $\Q$ holds in this greater generality. We choose not to pursue this here, but this will be developed in \cite{integralNori} where those are the schemes we consider naturally.}. F. Ivorra and S. Morel have constructed in \cite{ivorraFourOperationsPerverse2022} an abelian category of \emph{perverse Nori motives} $\Mp(X)$ on $X$ whose derived category $\Dd^b(\Mp(X))$ affords the six operations (the tensor product and the internal homomorphisms were constructed by L. Terenzi in \cite{terenziTensorStructurePerverse2024}). In our previous work \cite{SwannRealisation} we proved that this construction had a natural lift to the work of $\infty$-categories, and this is the language we will use in this note. In particular, if one denote by $\mathrm{DN}(X)$ the indization of the derived category of perverse Nori motives on $X$, we have a functor 
\[\Sch_k^\op\to \mathrm{CAlg}(\mathrm{Pr}^\mathrm{L})\] with values in presentably symmetric monoidal $\infty$-categories sending a finite type $k$-scheme to $\mathrm{DN}(X)$ and a morphism $f$ of scheme to the pullback functor $f^*$. Denote by $\mathrm{DM}(X)$ the presentable $\infty$-category of Voevodsky étale motives with rational coefficients (but without transfers). This category consists of the $\mathbb{P}^1$-stabilisation of the $\infty$-category of $\A^1$-invariants étale hypersheaves on the category $\mathrm{Sm}_X$ of smooth $X$-schemes (see \cite{MR3205601}, \cite{MR3477640} and \cite{robaloThese}). By \cite{SwannRealisation} there is a functor 
\[\rho_\mathrm{N}\colon \mathrm{DM}\to \mathrm{DN} \] that commutes with the 6 operations. Both the source and target of this functor afford the continuity property and h-descent.

% To resume, we have the following theorem:
% \begin{thm}
%   Let $\mathrm{DM}^0_c(k)$ be the thick full subcategory of $\mathrm{DM}_\mathrm{gm}(k)$ generated by the $f_*\Q$ for $f$ a finite étale morphisme. Then there is a t-structure on $\mathrm{DM}^0_c(k)$ whose heart consists of Artin representations $\mcal^0(k)$ of the absolute Galois group of $k$. Moreover, the functors 
%   \[\Dd^b(\mcal^0(k))\to \mathrm{DM}_c^0(k)\to \Dd^b(\mcal(k))\] are fully faithful.
% \end{thm}

\begin{defi}
  \label{alldef}
Let $k$ be a field and let $X$ be a finite type $k$-scheme. For this definition, $\mathrm{D}(X)$ denotes either $\mathrm{DM}(X)$ or $\mathrm{DN}(X)$.
\begin{enumerate}
  \item The $\infty$-category $\Dd^0(X)$ of Artin motives in $\mathrm{D}(X)$ is the localising\footnote{That is, the smallest full subcategory that contains the generators and is stable under all colimits.} subcategory of $\mathrm{D}(X)$ spanned by the objects $f_*\Q_Y$ where $f:Y\to X$ is finite. We denote by $\Dd^0_c(X)$ its full subcategory of compact objects, which is the thick full subcategory spanned by the $f_*\Q_Y$ for $f:Y\to X$ finite.
  \item The $\infty$-category $\Dd^{0,\mathrm{sm}}(X)$ of smooth Artin motives in $\mathrm{D}(X)$ is the localising subcategory of $\mathrm{D}(X)$ spanned by the objects $f_*\Q_Y$ where $f:Y\to X$ is finite and étale. We denote by $\Dd^{0,\mathrm{sm}}_c(X)$ its full subcategory of compact objects, which is the thick full subcategory spanned by the $f_*\Q_Y$ for $f:Y\to X$ finite and étale.
  \item The $\infty$-category $\Dd^\mathrm{coh}(X)$ of cohomological motives in $\mathrm{D}(X)$ is the localising subcategory of $\mathrm{D}(X)$ spanned by the objects $f_*\Q_Y$ where $f:Y\to X$ is proper. We denote by $\Dd^\mathrm{coh}_c(X)$ its full subcategory of compact objects, which is the thick full subcategory spanned by the $f_*\Q_Y$ for $f:Y\to X$ proper.
  \item The $\infty$-category $\Dd^{0,\mathrm{rig}}(X)$ of rigid Artin motives in $\mathrm{D}(X)$ is full subcategory of $\Dd^0(X)$ spanned by the dualisable objects (seen as objects of $\Dd(X)$). Because in $\Dd(X)$ the unit object $\Q_X$ is compact, we have $\Dd^{0,\mathrm{rig}}(X)\subset \Dd^0_c(X)$.
\end{enumerate}

\end{defi}
We have the following inclusions:
\[\Dd^{0,\mathrm{sm}}_c(X)\subset\Dd^{0,\mathrm{rig}}(X)\subset \Dd^{0}_c(X)\subset \Dd^\mathrm{coh}_c(X)\subset \Dd_c(X).\] 
Indeed the only nontrivial one is the first, which holds because for $f:Y\to X$ finite and étale, the object $f_*\Q_Y$ is dualisable with dual itself (this follows from the projection formula). If $X=\Spec(k)$ is the spectrum of a field of characteristic zero, then the reverse inclusion $\Dd_c^0(X)\subset \Dd^{0,\mathrm{sm}}_c(X)$ holds.

By the work of S. Pépin-Lehalleur \cite{MR3920833} and \cite{MR4033829} the $\infty$-category $\mathrm{DM}^0(X)$ affords an ordinary motivic t-structure which restricts to compact objects $\mathrm{DM}^0_c(X)$, and for which each $\ell$-adic realisation \[\mathrm{DM}^0_c(X)\to \Dd^b_c(X_{\'et},\Q_\ell)\] is  t-exact (and conservative) when $\Dd^b_c(X_{\'et},\Q_\ell)$ is endowed with the ordinary t-structure whose heart is the abelian category of constructible sheaves.

Because the functor $\rho_\mathrm{N}$ commutes with the 6 operations, it sends Artin motives to Artin motives, cohomological motives to cohomological motives, rigid objects to rigid objects \emph{etc.} Moreover, the conservativity and t-exactness of the $\ell$-adic realisation on Nori motives implies that the functor
\[\rho_\mathrm{N}\colon\mathrm{DM}^0(X)\to \mathrm{DN}(X)\] is t-exact. Our first result is the comparison of the big categories of Artin motives:
\begin{thm}[\Cref{thmA}]
    Let $k$ be a field of characteristic zero and let $X$ be a finite type $k$-scheme. The functor 
    $$\mathrm{DM}^0(X)\to\mathrm{DN}^0(X)$$
    is an equivalence.
\end{thm}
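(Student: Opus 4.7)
The plan is to show that the functor $\mathrm{DM}^0(X) \to \mathrm{DN}^0(X)$ is essentially surjective, preserves compact objects, and is fully faithful on the compact generators. The first two properties come for free from the formalism: $\rho_\mathrm{N}$ is compatible with the six operations and sends $\Q_Y$ to $\Q_Y$ for every scheme $Y$, hence it sends each compact generator $f_*\Q_Y$ (for $f : Y \to X$ finite) of $\mathrm{DM}^0(X)$ to the corresponding compact generator of $\mathrm{DN}^0(X)$; since it preserves colimits, essential surjectivity onto $\mathrm{DN}^0(X)$ and preservation of compactness are automatic. It therefore remains to check that for every pair of finite morphisms $f : Y \to X$ and $g : Z \to X$, the natural map
\[\mathrm{Map}_{\mathrm{DM}(X)}(f_*\Q_Y, g_*\Q_Z[n]) \longrightarrow \mathrm{Map}_{\mathrm{DN}(X)}(f_*\Q_Y, g_*\Q_Z[n])\]
is an equivalence for every integer $n$.

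Setting $W := Y \times_X Z$ with finite projections $p : W \to Y$ and $q : W \to Z$, the $(f_*, f^!)$-adjunction (available because $f$ is proper) together with the proper base change isomorphism $f^! g_* \simeq p_* q^!$ rewrites both sides of the above map as $\mathrm{Map}(\Q_W, q^!\Q_Z[n])$, computed in $\mathrm{DM}(W)$ or $\mathrm{DN}(W)$ respectively. The question thus reduces to a concrete comparison, between the Voevodsky and Nori formalisms, of sections of the constructible ``relative dualising'' object $q^!\Q_Z$ on the finite-type $k$-scheme $W$.

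To carry out this comparison I would argue by Noetherian induction on $\dim(X)$, combined with the h-descent enjoyed by both $\mathrm{DM}^0$ and $\mathrm{DN}^0$ and the t-exactness of $\rho_\mathrm{N}$ for the motivic t-structure on Artin motives (recalled in the introduction, via \cite{MR3920833,MR4033829}). By resolution of singularities in characteristic zero every $X$ admits an h-cover by smooth $k$-schemes; an open/closed recollement along a dense smooth open then reduces the problem on a smooth base to the problem on a closed subscheme of strictly smaller dimension (handled inductively) together with the problem over a dense open where the finite $X$-schemes in play are generically étale and Artin motives can be matched with $\Q$-local systems on both sides. The base case $X = \Spec(k)$ is the classical equivalence recalled in the introduction. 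The main obstacle will lie in this dévissage: one must verify that the subcategories $\mathrm{D}^0$ are stable under the operations $j_!, j^*, i_*, i^*$ of an open/closed recollement, that $\rho_\mathrm{N}$ intertwines these recollement data on both sides, and that the Artin property is itself local in the h-topology in a way compatible with the functor. Once these structural compatibilities are in place, the reduction to the case of a field closes the argument.
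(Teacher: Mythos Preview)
Your broad strategy (essential surjectivity and preservation of compacts are formal; reduce to full faithfulness on compact generators; Noetherian induction via an open/closed localisation with the smooth Artin case over a dense open and the field case as base) matches the paper's. But there is a genuine gap in the recollement step that your list of operations $j_!, j^*, i_*, i^*$ does not cover.

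Concretely: applying the cofiber sequence $j_!j^*N \to N \to i_*i^*N$ and mapping into $M$, the open part becomes $\mathrm{map}_U(j^*N, j^*M)$, which is fine. The closed part, however, is
\[
\mathrm{map}_X(i_*i^*N, M) \;\simeq\; \mathrm{map}_Z(i^*N, i^!M),
\]
and $i^!$ does \emph{not} preserve Artin motives---it only preserves cohomological motives. So $i^!M$ lies in $\mathrm{D}^{\mathrm{coh}}_c(Z)$, not in $\mathrm{D}^0_c(Z)$, and the induction hypothesis on $Z$ does not apply directly. (The same issue appears in your own reduction: the object $q^!\Q_Z$ for $q$ merely finite is cohomological, not Artin.) The paper's key extra input, which your proposal is missing, is the Artin truncation functor $\omega^0 \colon \mathrm{D}^{\mathrm{coh}} \to \mathrm{D}^0$ (right adjoint to the inclusion), together with the nontrivial fact that $\rho_{\mathrm{N}}$ commutes with $\omega^0$. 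This is established first over a field via Morel's weight t-structure (identifying $\omega^0$ with $\tau^{\leqslant \mathrm{Id}}$ on cohomological motives and computing it on generators via the Stein factorisation), and then globalised by punctual gluing \`a la Vaish. Once one has $\rho_{\mathrm{N}} \circ \omega^0 \simeq \omega^0 \circ \rho_{\mathrm{N}}$, the closed term becomes $\mathrm{map}_Z(i^*N, \omega^0 i^!M)$ with both arguments in $\mathrm{D}^0_c(Z)$, and the Noetherian induction goes through.

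Your detour through h-descent and resolution of singularities is also unnecessary: the paper works directly on an arbitrary finite-type $X$, using only generic smoothness to choose the dense open $U$ smooth with $j^*M$, $j^*N$ smooth Artin. The h-descent would not help with the core obstruction above in any case, since even over a smooth base the closed complement forces $i^!$ into the picture.
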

The proof consists in first proving the result for lisse motives, and then use the localisation sequence combined with Nair's construction of the Artin truncation functor $\omega^0$ to do an Noetherian induction.
% Because $\mathrm{DN}^0(X)$ is defined using generators and the functor $\rho_\mathrm{N}$ clearly reaches the generators, we only need to prove fully faitfulness. To do so, we first prove fully faithfulness on lisse motives $\mathrm{DM}^{0,\mathrm{Sm}}(X)$, then use the localisation sequence
% \[j_!j^*M\to M\to i_*i^*M\] to conclude by noetherian induction by choosing an open immersion $j\colon U\to X$ such that the restriction of both objects $M$ and $N$ we consider to $U$ are lisse. There is a last difficulty here, as if one looks at the mapping spectrum $\mathrm{map}{\mathrm{DN}(X)}(i_*i^*M,N)$ that appears in the dévissage, using adjunction, to be able to use the induction hypothesis, one would need $i^!N$ to be an Artin motive, which is false in general. However, it turns out that the inclusion of Artin motives in cohomological motives admits a right adjoint $\omega^0$, which we construct following Nair and Vaish \cite{MR4109490}, and prove that $\rho_\mathrm{N}$ commutes with $\omega^0$, finishing the proof.
Our second result uses an adaptation of a proof of Haas (\cite{HaasThesis}) in the case of $1$-motives to Artin motives and is a generalisation to normal schemes (but rational coefficients) of the result of Ruimy \cite[Theorem 2.2.2.1]{ruimyAbelianCategoriesArtin2023} over regular schemes:
\begin{thm}
  Let $X$ be a normal and connected finite type $k$-scheme. The inclusion 
  \[\mathrm{DM}^{0,\mathrm{sm}}_c(X)\to\mathrm{DM}^{0,\mathrm{rig}}(X)\] is an equivalence, the t-structure on $\mathrm{DM}^0_c(X)$ restricts to $\mathrm{DM}^{0,\mathrm{sm}}_c(X)$ and the heart is canonically identified (through the Artin motive functor) with $\mathrm{Rep}_\Q^A(\pi_1^{\'et}(X))$ the abelian category of Artin representations of the étale fundamental group of $X$.
\end{thm}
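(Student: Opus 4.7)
The plan is to handle the three assertions in sequence: the t-structure restriction and heart identification (largely formal), the easy inclusion $\mathrm{DM}^{0,\mathrm{sm}}_c\subset\mathrm{DM}^{0,\mathrm{rig}}$, and the substantive converse, which is the Haas-style heart of the argument.

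For the first two assertions, when $f\colon Y\to X$ is finite étale one has $f^*=f^!$ and $f_*=f_!$, and the conservative t-exact $\ell$-adic realisation shows that $f_*\Q_Y$ is concentrated in degree zero for Pépin-Lehalleur's t-structure. Hence the generators of $\mathrm{DM}^{0,\mathrm{sm}}_c(X)$ lie in the heart of $\mathrm{DM}^0_c(X)$ and the t-structure restricts. Via the Galois correspondence for the normal connected scheme $X$---finite étale $X$-schemes $\simeq$ finite continuous $\pi_1^{\'et}(X)$-sets---the resulting heart is identified with $\mathrm{Rep}_\Q^A(\pi_1^{\'et}(X))$. For the easy inclusion, $f_*\Q_Y$ is self-dual by the projection formula combined with $f^!\Q_X\simeq\Q_Y$, and thick closures preserve dualisability.

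For the hard inclusion $\mathrm{DM}^{0,\mathrm{rig}}(X)\subset\mathrm{DM}^{0,\mathrm{sm}}_c(X)$, take $M\in\mathrm{DM}^{0,\mathrm{rig}}(X)$ and let $j\colon U\hookrightarrow X$ be the regular locus, which is open dense since $X$ is normal in characteristic zero. Ruimy's theorem applied to the regular $U$ gives $j^*M\in\mathrm{DM}^{0,\mathrm{sm}}_c(U)$, and so $j^*M$ corresponds, via the heart identification on $U$, to a bounded complex of Artin $\pi_1^{\'et}(U)$-representations. Normality of $X$ makes the restriction functor from finite étale $X$-schemes to finite étale $U$-schemes fully faithful, so $\pi_1^{\'et}(U)\twoheadrightarrow\pi_1^{\'et}(X)$ is surjective and the induced embedding $\mathrm{Rep}_\Q^A(\pi_1^{\'et}(X))\hookrightarrow\mathrm{Rep}_\Q^A(\pi_1^{\'et}(U))$ is fully faithful, its image characterised by a factorisation condition on the Galois action. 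Following Haas's $1$-motive argument, I would exploit the globally defined evaluation and coevaluation maps of the rigid $M$: their compatibility with specialisation at points of $Z=X\setminus U$ forces the $\pi_1^{\'et}(U)$-action on the fibres of $M$ to descend to a $\pi_1^{\'et}(X)$-action, producing a smooth $N\in\mathrm{DM}^{0,\mathrm{sm}}_c(X)$ with $j^*N\simeq j^*M$. Since $\sHom(N,M)$ is itself rigid Artin, its identity section over $U$ extends to a global isomorphism $N\simeq M$ via a Noetherian induction on $\dim Z$.

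The main obstacle is precisely this Haas-type descent of the Galois action from $\pi_1^{\'et}(U)$ to $\pi_1^{\'et}(X)$: rigidity alone does not immediately force such a factorisation, and one must exploit the interaction of the evaluation/coevaluation data with specialisation maps across the singular stratum. Once this descent step is secured, the remainder of the argument follows formally from Ruimy's regular-base theorem and the Galois correspondence.
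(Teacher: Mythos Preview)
Your outline has the right broad shape---restrict to a regular dense open, identify the heart there, then descend---but the two load-bearing steps are not secured, and the paper's actual mechanisms are quite different from what you sketch.

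First, the descent step. You propose to force the $\pi_1^{\'et}(U)$-action to factor through $\pi_1^{\'et}(X)$ by ``exploiting the globally defined evaluation/coevaluation maps'' and their compatibility with specialisation, but you yourself flag this as the main obstacle and give no concrete argument. The paper bypasses this entirely via the $\ell$-adic realisation: since $M$ is dualisable, its $\ell$-adic realisation is a lisse $\Q_\ell$-sheaf on all of $X$, i.e.\ a representation of $\pi_1^{\'et}(X)$; one then has a commutative square
\[
\begin{tikzcd}
\pi_1^{\'et}(\eta)\ar[r]\ar[d,two heads] & \mathrm{GL}_n(\Q)\ar[d,hook]\\
\pi_1^{\'et}(X)\ar[r] & \mathrm{GL}_n(\Q_\ell)
\end{tikzcd}
\]
with the left map surjective (normality) and the right map injective, so the bottom map already lands in $\mathrm{GL}_n(\Q)$. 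This is the content of the paper's \Cref{GoodRed}, and it is both shorter and sharper than a direct specialisation argument. Your Noetherian-induction step ``extend the identity section of $\sHom(N,M)$'' is likewise replaced in the paper by something cleaner: the full faithfulness of restriction to the generic point on rigid hearts, proved via the Artin truncation $\omega^0$ and absolute purity (\Cref{omega0support}), which gives $M\simeq\omega^0 j_*j^*M$ for any dense open $j$.

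Second, the t-structure claim is not as formal as you assert. Knowing that the generators $f_*\Q_Y$ lie in the heart of $\mathrm{DM}^0_c(X)$ does \emph{not} by itself show that the t-structure restricts to the thick subcategory they generate: one needs the intersection with the heart to be a Serre subcategory, and on a merely normal base this is precisely part of what is being proven. The paper obtains the t-structure on $\mathrm{DM}^{0,\mathrm{sm}}_c(X)$ only \emph{after} establishing the equivalence $\mathrm{Rep}^A_\Q(\pi_1^{\'et}(X))\simeq\mathrm{DN}^{0,\mathrm{rig}}_c(X)^\heartsuit$, by transporting the t-structure from $\mathrm{DN}^{0,\mathrm{rig}}_c(X)$ (where it exists because tensor is t-exact). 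So the order of your argument should be reversed: prove the equivalence at the heart level first, then deduce the t-structure.
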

We finish with some application to motivic Galois groups in \Cref{exactDiagMotGroup}.

This work will be used in \cite{integralNori} to prove the same statements with integral coefficients, and to extend Ruimy's main theorem \cite[Theorem 3.2.3.7]{ruimyAbelianCategoriesArtin2023} to the characteristic zero case: the Artin vanishing theorem for the perverse homotopy structure on Artin motives.
\subsection{Artin motives over a field.}

Let $k$ be a field of characteristic zero. As reminded in the introduction we have:
\begin{thm}[Ayoub -- Barbieri-Viale -- Huber -- Müller-Stachs -- Orgogozo -- Pepin-Lehalleur]
	Let $k$ be a field a characteristic zero. The functor \[\rho_\mathrm{N}^0:\Dd^b(\mathrm{Rep}^A_\Q(\mathrm{Gal}(\overline{k}/k)))\simeq \mathrm{DM}_c^0(k)\to\mathrm{DN}_c^0(k)\] 
	is an equivalence of stable $\infty$-categories. Moreover, the composition with the inclusion 
	$$\Dd^b(\mathrm{Rep}^A_\Q(\mathrm{Gal}(\overline{k}/k)))\to\Dd^b(\mcal(k))$$
	 is t-exact. 
\end{thm}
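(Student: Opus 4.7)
The plan is to reduce to known results by identifying both $\mathrm{DM}_c^0(k)$ and $\mathrm{DN}_c^0(k)$ with $\Dd^b(\mathrm{Rep}^A_\Q(\Gal(\overline{k}/k)))$ and then checking that $\rho_{\mathrm{N}}^0$ corresponds to the identity functor under these two identifications.

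On the Voevodsky side, I would combine Pépin-Lehalleur's motivic t-structure on $\mathrm{DM}_c^0(k)$ with the Orgogozo--Ayoub--Barbieri-Viale identification of its heart $\mcal^0(k)$ with $\mathrm{Rep}^A_\Q(\Gal(\overline{k}/k))$, which sends $f_*\Q_Y$ to the Galois representation $\HH^0(Y_{\overline{k}},\Q)$. Because every Artin representation factors through a finite quotient $G$ and $\Q[G]$-modules are semi-simple by Maschke's theorem, all higher Ext groups in $\mathrm{Rep}^A_\Q(\Gal(\overline{k}/k))$ vanish, so the canonical functor $\Dd^b(\mcal^0(k)) \to \mathrm{DM}_c^0(k)$ is an equivalence.

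On the Nori side, the argument sketched in the introduction produces an equivalence $\Dd^b(\mcal^0(k)) \simeq \mathrm{DN}_c^0(k)$: the Huber--Müller-Stach embedding $\mcal^0(k) \hookrightarrow \mcal(k)$ is fully faithful, and Arapura's weight theory (pure weight-zero objects are closed under subquotients and extensions in $\mcal(k)$) forces $\Ext^i_{\Dd^b(\mcal(k))}(M,N) = 0$ for $i > 0$ whenever $M,N \in \mcal^0(k)$. Hence $\Dd^b(\mcal^0(k)) \hookrightarrow \Dd^b(\mcal(k))$ is fully faithful, with essential image the thick subcategory generated by the $f_*\Q_Y$ for $Y \to \Spec(k)$ finite étale; over a field of characteristic zero this is precisely $\mathrm{DN}_c^0(k)$, since any finite $k$-scheme has the same cohomology as its finite étale reduction.

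It then suffices to observe that $\rho_{\mathrm{N}}^0$ is t-exact (recalled just before the theorem) and commutes with proper pushforward, so it sends the Voevodsky generator $f_*\Q_Y$ to the Nori generator $f_*\Q_Y$; both heart identifications attach to this object the same Artin representation $\HH^0(Y_{\overline{k}},\Q)$. Since these objects generate the heart, $\rho_{\mathrm{N}}^0$ induces the identity on $\mathrm{Rep}^A_\Q(\Gal(\overline{k}/k))$, and semi-simplicity lifts this to an equivalence of bounded derived categories. The remaining t-exactness statement is automatic, as $\mcal^0(k) \hookrightarrow \mcal(k)$ is an exact functor of abelian categories. The main (but routine) obstacle is to verify that the two independent heart identifications coincide through $\rho_{\mathrm{N}}^0$, which reduces to the compatibility of $\rho_{\mathrm{N}}$ with the unit object and with finite pushforward.
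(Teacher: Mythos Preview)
Your proposal is correct and follows essentially the same argument the paper sketches in its introduction: identify both sides with $\Dd^b(\mathrm{Rep}^A_\Q(\Gal(\overline{k}/k)))$ via Orgogozo and Pépin-Lehalleur on the Voevodsky side, and via the Huber--Müller-Stach embedding together with Arapura's weight theory (forcing vanishing of higher Ext in $\mcal(k)$ between weight-zero objects) on the Nori side. The paper itself does not give a separate proof of this theorem beyond that sketch, treating it as a known result attributed to the named authors; your write-up is in fact slightly more explicit than the paper about the compatibility step, namely that $\rho_\mathrm{N}$ commutes with finite pushforward and hence matches the generators $f_*\Q_Y$ on both sides.
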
 
In the theorem we have denoted by $\mathrm{Rep}^A_\Q(\mathrm{Gal}(\overline{k}/k))$ the abelian category of Artin representations of the absolute Galois group of $k$, that is finite dimensional representation $(V,\rho)$ such that the induced morphism $\mathrm{Gal}(\overline{k}/k)\to\mathrm{GL}(V)$ factors through a finite quotient, a category that we also have denoted by $\mcal^0(k)$. We also have denoted by $\mcal(k)$ the abelian category of cohomological Nori motives over $k$, which coincides with the category $\Mp(\Spec(k))$ of perverse Nori motives over $\Spec(k)$.

We begin with a statement about cohomological Nori motives.
\begin{prop}
  \label{tstrctufields}
The t-structure on $\Dd^b(\mcal(k))$ restricts to $\mathrm{DN}^\mathrm{coh}_c(k)$. We denote by $\mcal^\mathrm{coh}(k)$ its heart.
\end{prop}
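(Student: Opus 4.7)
To prove that the standard t-structure on $\Dd^b(\mcal(k))$ restricts to $\mathrm{DN}^{\mathrm{coh}}_c(k)$, it suffices to check that this thick subcategory is stable under the cohomology functors $H^i$. Since $\mathrm{DN}^{\mathrm{coh}}_c(k)$ is by definition the thick subcategory of $\Dd^b(\mcal(k))$ generated by the objects $f_*\Q_Y$ for $f\colon Y\to\Spec(k)$ proper, and thick subcategories are closed under cones and retracts, a standard devissage reduces the problem to verifying that $H^i(f_*\Q_Y)\in \mathrm{DN}^{\mathrm{coh}}_c(k)$ for every such $f$ and every $i\in\Z$.

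I would then reduce to the smooth projective case via resolution of singularities in characteristic zero: every proper $Y$ admits a finite smooth proper hyperresolution $Y_\bullet\to Y$ with each $Y_n$ smooth projective, and h-descent for $\mathrm{DN}$ (as recalled in the introduction) identifies $f_*\Q_Y$ with the totalisation of the finite simplicial diagram $f_{\bullet,*}\Q_{Y_\bullet}$. Since the hyperresolution has bounded simplicial length, this realises $f_*\Q_Y$ as a finite iterated extension of pushforwards from smooth projective varieties, and the problem becomes that of showing $H^i(g_*\Q_Y)\in \mathrm{DN}^{\mathrm{coh}}_c(k)$ for $g\colon Y\to\Spec(k)$ smooth projective. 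The plan is to establish the motivic decomposition $g_*\Q_Y\simeq \bigoplus_i H^i(Y,\Q)[-i]$ in $\Dd^b(\mcal(k))$, which immediately exhibits each $H^i(Y,\Q)$ as a direct summand of the generator $g_*\Q_Y[i]$ and therefore as an object of $\mathrm{DN}^{\mathrm{coh}}_c(k)$.

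The hardest step is the last one, which is the Nori-motivic version of Deligne's theorem on the degeneration of the Leray spectral sequence for smooth projective morphisms. The idea is to construct the Künneth projectors $\pi_i\in \End_{\mcal(k)}(H^*(Y,\Q))$ out of the diagonal class of $Y$ and Poincaré duality, and then to lift them to idempotent endomorphisms of $g_*\Q_Y$ in the derived category. The lift is controlled by Arapura's weight formalism on $\mcal(k)$: the objects $H^i(Y,\Q)$ are pure of weight $i$, and the obstructions to splitting the Postnikov tower of $g_*\Q_Y$ live in $\Ext^{\geq 2}_{\mcal(k)}$-groups between pure objects, whose vanishing is furnished by the semi-simplicity of the subcategory of pure Nori motives (stable under extensions and subquotients in $\mcal(k)$) combined with the comparison to classical realisations, where the decomposition theorem is due to Deligne. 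Once the decomposition is granted, all generators of $\mathrm{DN}^{\mathrm{coh}}_c(k)$ have cohomology in $\mathrm{DN}^{\mathrm{coh}}_c(k)$, the t-structure restricts, and one sets $\mcal^{\mathrm{coh}}(k)$ to be the resulting heart.
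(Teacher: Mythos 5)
Your opening dévissage is where the argument breaks. To show the t-structure restricts you must show that every $M\in\mathrm{DN}^{\mathrm{coh}}_c(k)$ has all $\HH^i(M)$ in $\mathrm{DN}^{\mathrm{coh}}_c(k)$; the full subcategory of complexes with this property is stable under shifts and retracts, but its stability under cofibers is exactly the point at issue: for a map $M\to N$ the cohomology of the cone is an extension of a kernel by a cokernel of maps $\HH^i(M)\to\HH^i(N)$, so you need to know beforehand that kernels, cokernels and extensions of objects of $\mathrm{DN}^{\mathrm{coh}}_c(k)\cap\mcal(k)$ remain in $\mathrm{DN}^{\mathrm{coh}}_c(k)$, i.e.\ that the relevant subcategory of the heart is a Serre subcategory. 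That is precisely what has to be proved, so "check the generators" is not a legitimate reduction; the same problem reappears in your passage from proper to smooth projective via hyperresolutions, since the cohomology of a finite totalisation is built from subquotients of the cohomologies of the pieces, not only from the pieces themselves. The paper's proof is organised around exactly this obstacle: it introduces the subcategory $\acal\subset\mcal(k)$ of objects all of whose weight-graded pieces $\mathrm{Gr}^W_wM$ are direct sums of subquotients of $\HH^i(f_*\Q_X)$ with $f$ smooth projective, shows $\acal$ is a Serre subcategory (exactness of the weight truncations plus semi-simplicity of the pure subcategories), so the t-structure restricts to the thick category $\ccal$ of complexes with cohomology in $\acal$, and then proves $\mathrm{DN}^{\mathrm{coh}}_c(k)=\ccal$ — the decomposition theorem for one inclusion, and dévissage plus semi-simplicity (turning subquotients of pure objects into direct summands) for the other. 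Your proposal has no step playing the role of closure under subquotients and extensions, and without it neither of your two reductions closes.

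A second, more local problem: your proof of the splitting $g_*\Q_Y\simeq\bigoplus_i\HH^i(Y,\Q)[-i]$ by killing the Postnikov obstructions needs the vanishing of $\Ext^{\,n}_{\mcal(k)}(\HH^i,\HH^j)$ for $n=i-j+1\geqslant 2$, i.e.\ higher extensions between pure objects of \emph{different} weights. Semi-simplicity of pure Nori motives only gives vanishing of $\Ext^1$ between pure objects of the same weight, and the comparison with classical realisations does not help: the realisation is faithful and conservative but not full, so vanishing of an obstruction class after realisation (where Deligne's theorem holds) does not imply its vanishing in $\Dd^b(\mcal(k))$; higher Ext-vanishing in $\mcal(k)$ is not known. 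The decomposition itself is true and is what the paper simply invokes as "the decomposition theorem"; if you want to prove it, the correct route is Deligne's hard Lefschetz argument (hard Lefschetz can be checked in $\mcal(k)$ because the realisation is exact and conservative), not an obstruction-theoretic splitting of the Postnikov tower.
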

\begin{proof}
Let $\acal$ be the full subcategory of $\mcal(k)$ whose objects are the $M\in\mcal(k)$ satisfying the following property $(*)$: For all $w\in\Z$ the graded piece $\mathrm{Gr}^W_w M$ is a direct sum of subquotients of objects of the form $\HH^i(f_*\Q_X)$ for $i\in\Z$ and $f\colon X\to\Spec(k)$ smooth and projective. Because for such an $f$ the object $\HH^i(f_*\Q_X)$ is pure of weigh $i$, it turns out that only $i=w$ appears in the property. Moreover, because the weight truncation functors are exact and the categories of objects pure of a certain weight $w\in\Z$ are semi-simple, the category $\acal$ is a Serre abelian subcategory of $\mcal(k)$, that is it is table under extensions and subquotients. In particular, the t-structure on $\mathrm{DN}_c(k)$ restricts to the stable $\infty$-category $\ccal$ consisting of complexes having cohomology sheaves in $\acal$. By the decomposition theorem we have $\mathrm{DN}^\mathrm{coh}_c(k)\subset \ccal$, and by dévissage and the decomposition theorem again, the reverse inclusion is also true, finishing the proof.
\end{proof}
\begin{rem}
The above proposition works verbatim if one replaces cohomological motives $\mathrm{DN}^\mathrm{coh}_c(k)$ with $n$-motives $\mathrm{DN}^n_c(k)$, the thick category spanned by the $f_*\Q_X$ for $f$ a smooth proper morphism of relative dimension $\leqslant n$, providing abelian categories of $n$-motives $\mcal^n(k)$ for all $n\in\N$.
\end{rem}

We use the ideas of Morel \cite{MR2350050}, Nair and Vaish \cite{MR3293216}, and Ruimy \cite{ruimyArtinPerverseSheaves2023} to have a well behaved Artin truncation functor on $\mathrm{DN}^\mathrm{coh}_c(k)$. Note that at first, there are \emph{a priori} two ways of producing an Artin motive out of a cohomological motive. The first one is tautological: the inclusion $\mathrm{DN}^0(k)\to\mathrm{DN}^\mathrm{coh}(k)$ preserves colimits by definition, hence admits a right adjoint $\omega^0$ by Lurie's adjoint functor theorem. The second one is inspired by \cite{MR2350050}:
\begin{constr}
  Denote by \[\mathrm{DN}_{c}(k)^{ \leqslant \mathrm{Id}} := \{M\in\mathrm{DN}_{c}(k)\mid \forall i\in\Z, \HH^i(M) \text{ is of weights }\leqslant 0\} \] 
  and by and by 
  \[\mathrm{DN}_{c}(k)^{> \mathrm{Id}} := \{M\in\mathrm{DN}_{c}(k)\mid \forall i\in\Z, \HH^i(M) \text{ is of weights }> 0\} .\]
  By \cite[Proposition 3.1.1]{MR2350050}, this form a t-structure $\mathrm{t}^\mathrm{Id}$ on $\mathrm{DN}_{c}(k)$. Denote
   by $\tau_c^{\leqslant \mathrm{Id}}$ be the truncation functor right adjoint to 
  the inclusion $\mathrm{DN}_{c}(k)^{\omega \leqslant 0}\to\mathrm{DN}_c(k)$. 
  As the t-structure on $\mathrm{DN}(k)$ restricts to $\mathrm{DN}_c(k)$, we see
   that the t-structure $\mathrm{t}^\mathrm{Id}$
   of Morel on $\mathrm{DN}_c(k)$ induces one on $\mathrm{DN}(k)$ and the associated functor \[\tau^{\leqslant\mathrm{Id}}\colon \mathrm{DN}(k)\to\mathrm{DN}(k)^{\leqslant \mathrm{Id}}\] preserves compact objects, giving back the above $\tau_c^{\leqslant\mathrm{Id}}$. Moreover, $\tau^{\leqslant\mathrm{Id}}$ is an exact functor of stable $\infty$-categories.
  \end{constr}

  \begin{prop}
    \label{weightTruncationfield}
    We and an equality 
    \[\mathrm{DN}^0(k) = \mathrm{DN}^\mathrm{coh}(k)\cap\mathrm{DN}(k)^{\leqslant \mathrm{Id}}\] of full subcategories of $\mathrm{DN}(k)$. Moreover, the restriction of $\tau^{\leqslant\mathrm{Id}}$ to $\mathrm{DN}^\mathrm{coh}(k)$ coincides with $\omega^0$, so that the latter preserves compact objects and is t-exact.
  \end{prop}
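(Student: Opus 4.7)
The plan is to establish the equality of full subcategories first and then identify the two truncation functors via a universal property argument. Throughout I use the motivic t-structure on $\mathrm{DN}^\mathrm{coh}(k)$ with heart $\mcal^\mathrm{coh}(k)$ provided by \Cref{tstrctufields}.

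For the inclusion $\mathrm{DN}^0(k) \subseteq \mathrm{DN}^\mathrm{coh}(k) \cap \mathrm{DN}(k)^{\leqslant \mathrm{Id}}$, I would first note that the right-hand side is closed under colimits in $\mathrm{DN}(k)$: the factor $\mathrm{DN}^\mathrm{coh}(k)$ is localizing by definition, while $\mathrm{DN}(k)^{\leqslant \mathrm{Id}}$ is a coaisle of a t-structure on a presentable category, hence closed under colimits. It then suffices to check that the generators $f_*\Q_Y$ of $\mathrm{DN}^0(k)$, for $f\colon Y \to \Spec(k)$ finite, lie in both factors: they are proper pushforwards, hence cohomological; and replacing $Y$ with $Y_\mathrm{red}$, which is finite étale over $k$ in characteristic zero, $f_*\Q_Y \simeq (f|_{Y_\mathrm{red}})_*\Q_{Y_\mathrm{red}}$ is pure of weight $0$ concentrated in cohomological degree $0$, so lies in $\mathrm{DN}(k)^{\leqslant \mathrm{Id}}$.

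For the reverse inclusion, given $N$ in the intersection I would show that each cohomology sheaf $\HH^i(N) \in \mcal^\mathrm{coh}(k)$ is Artin. By \Cref{tstrctufields}, the graded piece $\mathrm{Gr}^W_w \HH^i(N)$ is a sum of subquotients of $\HH^w(f_*\Q_X)$ for $f\colon X \to \Spec(k)$ smooth projective; for $w < 0$ this vanishes (as $\HH^w(X) = 0$), while for $w > 0$ it vanishes by the hypothesis that $\HH^i(N)$ has weights $\leqslant 0$. So $\HH^i(N)$ is pure of weight $0$, hence a sum of subquotients of the Artin motives $\HH^0(f_*\Q_X) = \Q^{\pi_0(X)}$, so $\HH^i(N) \in \mcal^0(k)$. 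Reconstructing $N$ from its cohomology via the motivic Postnikov tower (after a standard reduction to the compact, bounded case), one concludes $N \in \mathrm{DN}^0(k)$ since the latter is a localizing subcategory containing all shifts of $\mcal^0(k)$.

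For the identification $\tau^{\leqslant \mathrm{Id}}|_{\mathrm{DN}^\mathrm{coh}(k)} = \omega^0$, the key intermediate step is that $\tau^{\leqslant \mathrm{Id}}$ preserves $\mathrm{DN}^\mathrm{coh}(k)$. At the heart level this is transparent: Morel's truncation acts on $\mcal^\mathrm{coh}(k)$ by $M \mapsto W_0 M$, and $\mcal^\mathrm{coh}(k)$ is a Serre subcategory of $\mcal(k)$ by \Cref{tstrctufields}; the derived statement follows from compatibility of the weight and motivic t-structures. Granted this, $\tau^{\leqslant \mathrm{Id}}(N) \in \mathrm{DN}^\mathrm{coh}(k) \cap \mathrm{DN}(k)^{\leqslant \mathrm{Id}} = \mathrm{DN}^0(k)$ for every $N \in \mathrm{DN}^\mathrm{coh}(k)$ by the first part, and for any $M \in \mathrm{DN}^0(k) \subseteq \mathrm{DN}(k)^{\leqslant \mathrm{Id}}$ the canonical map induces an equivalence $\Map_{\mathrm{DN}(k)}(M, \tau^{\leqslant \mathrm{Id}} N) \simeq \Map_{\mathrm{DN}(k)}(M, N)$ by the defining property of the truncation, which exhibits $\tau^{\leqslant \mathrm{Id}}(N)$ as $\omega^0(N)$. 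Compactness preservation of $\omega^0$ transfers from that of $\tau^{\leqslant \mathrm{Id}}$ (using that compact objects of $\mathrm{DN}^\mathrm{coh}(k)$ and $\mathrm{DN}^0(k)$ coincide with their respective intersections with the compact objects of $\mathrm{DN}(k)$), and t-exactness is immediate since $\omega^0$ acts on $\HH^i$ by the exact functor $W_0$, which cannot alter vanishing in any degree. The main obstacle is precisely the intermediate step, since \Cref{tstrctufields} is stated only at the heart level and lifting to complexes requires a careful compatibility check between the motivic and Morel t-structures.
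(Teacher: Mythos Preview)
Your argument is essentially correct but follows a different path from the paper. You prove the reverse inclusion by showing, via the weight filtration and the description of $\mcal^\mathrm{coh}(k)$ in \Cref{tstrctufields}, that every cohomology object of a compact $N$ in the intersection is pure of weight $0$ and hence Artin, then reconstruct $N$ from its Postnikov tower; the passage to the non-compact case and the ``intermediate step'' you flag both reduce to the elementary identity $\tau^{\leqslant\mathrm{Id}}(M[-i])=(W_0M)[-i]$ for $M$ in the heart, together with exactness of $\tau^{\leqslant\mathrm{Id}}$ and the Serre property of $\mcal^\mathrm{coh}(k)$, so this is not a genuine obstacle. The paper instead checks the single claim that $\tau^{\leqslant\mathrm{Id}}f_*\Q_X$ is Artin for $f\colon X\to\Spec(k)$ smooth proper, by identifying the Stein factorisation map $\pi_*\Q_{\pi_0(X)}\to f_*\Q_X$ simultaneously with both the ordinary $\tau^{\leqslant 0}$ and the Morel truncation; exactness of $\tau^{\leqslant\mathrm{Id}}$ then propagates to all of $\mathrm{DN}^\mathrm{coh}(k)$ and the Yoneda identification with $\omega^0$ follows. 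The advantage of the paper's route is that it produces the explicit formula $\omega^0 f_*\Q_X\simeq \pi_*\Q_{\pi_0(X)}$, which is exactly what is needed in the subsequent \Cref{compatiOmega0}; your abstract argument gives the proposition but not this formula. Two small slips: $\mathrm{DN}(k)^{\leqslant\mathrm{Id}}$ is the aisle, not the coaisle (though your conclusion that it is closed under colimits is correct), and $\HH^0(f_*\Q_X)$ is the Artin motive $\pi_*\Q_{\pi_0(X)}$ rather than a power of $\Q$ when $k$ is not algebraically closed.
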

  \begin{proof}
    Because the generators of $\mathrm{DN}^0(k)$ are in $\mathrm{DN}(k)^{\leqslant\mathrm{Id}}$, we have a trivial inclusion 
    \[\mathrm{DN}^0(k) \subset\mathrm{DN}^\mathrm{coh}(k)\cap\mathrm{DN}(k)^{\leqslant \mathrm{Id}}.\] To prove the reverse inclusion, it suffices to prove that if $M=f_*\Q_X$ is a generator of $\mathrm{DN}^\mathrm{coh}(k)$, meaning that $f\colon X\to\Spec(k)$ is smooth and proper, then the object $\tau^{\leqslant\mathrm{Id}}f_*\Q_X$ is an Artin motive.
    Denote by $X\map{\alpha}\pi_0(X)\map{\pi}\Spec k$ the Stein factorisation 
		  of $f$ (see \cite[Corollaire 4.3.3]{MR0217085}). There is a natural map $\pi_*\Q_{\pi_0(X)}\to f_*\Q_X$, which is an isomorphism on $\HH^0$, as it can be seen after realisation ($\HH^0(\pi_0(X))\simeq \HH^0(X)$ is of rank the number of connected components of $X$.)
		  It is also an isomorphism on $\HH^i$ for $i<0$ because pushforward are left exact. 
		As $\HH^i(\pi_*\Q_{\pi_0(X)}) = 0$ if $i>0$ because $\pi$ is finite étale hence $\pi_*$ is t-exact, we have in fact that the cofiber $C$
		of the map $\pi_*\Q_{\pi_0(X)}\to f_*\Q_X$ verifies $\HH^i(C)=0$ if $i\leqslant 0$ and $\HH^i(C)\simeq\HH^i(f_*\Q_X)$ for $i>0$. The cofiber 
		sequence \[\pi_*\Q_{\pi_0(X)}\to f_*\Q_X \to C\] is therefore equivalent to the truncation cofiber sequence 
		\[\tau^{\leqslant 0}f_*\Q_X\to f_*\Q_X \to \tau^{>0}f_*\Q_X \]
		for the canonical t-structure on $\Dd^b(\mcal(k))$. For $i\geqslant 1$, the weight of $\HH^i(f_*\Q_X)$ is $i$ and the weight of $\pi_*\Q_{\pi_0(X)}$ is zero, hence 
		this cofiber sequence is also the truncation cofiber sequence for the Morel t-structure: the map $\pi_*\Q_{\pi_0(X)}\to f_*\Q_X$ coincides with 
		the map $\tau^{\leqslant \mathrm{Id}}f_*\Q_X\to f_*\Q_X$. In particular, $\tau^{\leqslant \mathrm{Id}}M$ is an Artin motive. The fact that \[\omega^0\simeq (\tau^{\leqslant\mathrm{Id}})_{\mid \mathrm{DN}^\mathrm{coh}}\] can be seen by a direct application of the Yoneda lemma. Finally, the t-exactness of $\omega^0$ follows from the t-exactness of the weight truncation functors.
  \end{proof}
  \begin{cor}
    \label{compatiOmega0}	Let $\omega^0:\mathrm{DM}^\mathrm{coh}(k)\to\mathrm{DM}^0(k)$ be the right adjoint to the inclusion constructed by \cite{MR3920833}, \cite{MR4109490} and \cite{MR2494373}. 
    Then the natural map $\rho_\mathrm{N}\circ\omega^0\to\omega^0\circ \rho_\mathrm{N}$ is an equivalence..
  \end{cor}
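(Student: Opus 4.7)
The plan is to reduce the question to a computation on a set of generators of $\mathrm{DM}^\mathrm{coh}(k)$ after showing that both sides of the natural transformation preserve colimits.

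First I would verify that both $\omega^0$ functors preserve compact objects: on the Nori side this is part of \Cref{weightTruncationfield}, and on the Voevodsky side this is built into the construction of Pépin-Lehalleur (the Artin truncation is obtained from Morel's weight truncation, which preserves compacts by construction). An exact functor between compactly generated stable $\infty$-categories that preserves compact objects is automatically colimit-preserving, since it is the $\Ind$-extension of its restriction to compacts. Combined with $\rho_\mathrm{N}$ being a left adjoint, both composites $\rho_\mathrm{N}\omega^0$ and $\omega^0\rho_\mathrm{N}$ are then colimit-preserving functors $\mathrm{DM}^\mathrm{coh}(k)\to\mathrm{DN}^0(k)$. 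It therefore suffices to check that the comparison map is an equivalence on a set of generators.

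Next I would invoke resolution of singularities to take as generators of $\mathrm{DM}^\mathrm{coh}(k)$ the objects $f_*\Q_X$ with $f\colon X\to\Spec(k)$ smooth and projective. For such $f$, with Stein factorization $X\xrightarrow{\alpha}\pi_0(X)\xrightarrow{\pi}\Spec(k)$, the argument of \Cref{weightTruncationfield} identifies $\omega^0(f_*\Q_X)\simeq \pi_*\Q_{\pi_0(X)}$ on the Nori side. The same proof applies verbatim on the Voevodsky side, since Pépin-Lehalleur's $\omega^0_\mathrm{DM}$ coincides with the Morel weight $\leqslant 0$ truncation on cohomological motives, and the argument only uses the compatibility between the canonical t-structure and the weight filtration.

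Finally, since $\rho_\mathrm{N}$ commutes with the six operations, it sends $\pi_*\Q_{\pi_0(X)}$ and $f_*\Q_X$ to themselves, so the chain of equivalences
\[\rho_\mathrm{N}\bigl(\omega^0_\mathrm{DM}(f_*\Q_X)\bigr)\simeq \rho_\mathrm{N}\bigl(\pi_*\Q_{\pi_0(X)}\bigr)\simeq \pi_*\Q_{\pi_0(X)}\simeq \omega^0_\mathrm{DN}\bigl(\rho_\mathrm{N}(f_*\Q_X)\bigr)\]
is compatible with the canonical comparison map, yielding an equivalence on generators. The step I expect to be the main obstacle is the clean verification of the Stein factorization description of $\omega^0_\mathrm{DM}$ on smooth projective generators; this requires importing the analog of \Cref{weightTruncationfield} on the Voevodsky side, which in turn rests on showing that the Morel weight t-structure exists on $\mathrm{DM}(k)$ and that Pépin-Lehalleur's $\omega^0_\mathrm{DM}$ coincides with its $\leqslant 0$ truncation restricted to $\mathrm{DM}^\mathrm{coh}(k)$.
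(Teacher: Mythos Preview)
Your proposal is correct and follows essentially the same route as the paper: reduce to generators $f_*\Q_X$ with $f$ smooth proper, and use the Stein factorisation computation $\omega^0(f_*\Q_X)\simeq \pi_*\Q_{\pi_0(X)}$ on both sides together with the compatibility of $\rho_\mathrm{N}$ with the six operations. The only point worth noting is that the obstacle you flag is not one: the Stein factorisation description of $\omega^0_{\mathrm{DM}}$ on smooth proper generators is already \cite[Proposition 3.7]{MR3920833}, so you can simply cite it rather than re-establishing the Morel weight t-structure on $\mathrm{DM}(k)$.
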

  \begin{proof}
    As $\rho_\mathrm{N}$ sends Artin motives to Artin motives we have a natural transformation $\rho_\mathrm{N}\circ\omega^0\to \omega^0\circ \rho_\mathrm{N}$. To show that this 
    map is an equivalence it suffices to check that this natural transformation is an equivalence when evaluated at the generators $f_*\Q_X$ of cohomological motives, where $f:X\to \Spec k$ is proper and smooth.
    This is the case by commutation of $\rho_\mathrm{N}$ with the operations and because of the computation $\omega^0f_*\Q_X\simeq \pi_*\Q_{\pi_0(X)}$ with $\pi:\pi_0(X)\to\Spec(k)$ the Stein factorisation of $X$. 
    Indeed for Nori motives this has been proven in the previous proposition, and for Voevodsky motives this is \cite[Proposition 3.7]{MR3920833}.
  \end{proof}
  \subsection{General permanence properties.}
  In order to do efficient dévissage, we will need our categories to have a good functorial behaviour. We start with stability by the operations:
  \begin{prop}
    \label{stabop}
    Let $k$ be a field of characteristic zero and let $f\colon Y\to X$ be a map of finite type $k$-schemes. Then
    \begin{enumerate}
    \item The functor $f^*$ preserves all categories defined in \Cref{alldef}. 
    \item The functors $f_*$ and $f_!$ preserve $\mathrm{DN}^\mathrm{coh}$ and $\mathrm{DM}^\mathrm{coh}$.
    \item If $f$ is quasi-finite the functor $f_!$  preserves $\mathrm{DN}^0$ and $\mathrm{DM}^0$ 
	\item The functor $f^!$ preserves $\mathrm{DN}^\mathrm{coh}$ and $\mathrm{DM}^\mathrm{coh}$.
    \end{enumerate}  
  \end{prop}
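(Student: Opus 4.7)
The overall strategy is to reduce each statement to a check on compact generators. This reduction rests on the fact that in $\mathrm{DM}$ and $\mathrm{DN}$ all six operations commute with colimits: the left adjoints $f^*$ and $f_!$ trivially do, and since $f^*$ and $f_!$ also preserve compact objects in the motivic formalism, their right adjoints $f_*$ and $f^!$ commute with colimits too. It therefore suffices to verify that each operation sends the specified generators into the asserted localising subcategory.

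For (1) I would use base change: for $g$ finite, finite étale, or proper one has $f^*g_*\Q_Y\simeq g'_*\Q_{Y'}$ with $g'$ the pullback of $g$, and each of these properties is stable under base change. For $\Dd^{0,\mathrm{rig}}$ one further uses that $f^*$ is symmetric monoidal and hence preserves dualisable objects. For the $f_!$ statement in (2), and for (3), I factor $fg\colon Z\to X$ via Nagata's compactification (respectively Zariski's Main Theorem) as $Z\hookrightarrow W\to X$ with a dense open immersion $j$ followed by a proper (respectively finite) map $\bar p$. Pushing the localisation cofiber sequence $j_!\Q_Z\to\Q_W\to i_*\Q_D$ forward by $\bar p$ exhibits $(fg)_!\Q_Z=\bar p_*j_!\Q_Z$ as the fiber of a morphism between $\bar p_*\Q_W$ and $(\bar p\circ i)_*\Q_D$, both of which lie in the asserted subcategory since $\bar p$ and $\bar p\circ i$ are proper (respectively finite).

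For $f_*$ on $\Dd^\mathrm{coh}$ and for (4), I would invoke Verdier duality: on compact objects $\mathbb{D}f_*\mathbb{D}\simeq f_!$ and $\mathbb{D}f^!\mathbb{D}\simeq f^*$, so both assertions follow from (1) together with stability of $\Dd_c^\mathrm{coh}$ under $\mathbb{D}$. I would check that stability on a generator $g_*\Q_Z$ with $g\colon Z\to X$ proper: there $\mathbb{D}(g_*\Q_Z)\simeq g_*\omega_Z$ with $\omega_Z=\pi_Z^!\un$, and resolution of singularities in characteristic zero together with a dévissage along a proper resolution $\tilde Z\to Z$ reduces to the case $Z$ smooth, in which $\omega_Z\simeq\Q_Z(d)[2d]$. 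Stability of $\Dd_c^\mathrm{coh}$ under Tate twists then follows from the projective bundle formula $p_*\Q_{\mathbb{P}^n_X}\simeq\bigoplus_{i=0}^n\Q_X(-i)[-2i]$ applied to the proper projection $p\colon\mathbb{P}^n_X\to X$, which displays each $\Q_X(-i)[-2i]$ as a direct summand of a generator of $\Dd_c^\mathrm{coh}(X)$. The main obstacle is this last duality step: the dévissage for singular generators requires care. An alternative for $f_*$ on $\Dd^\mathrm{coh}$ and for (4), avoiding duality, would be a direct Nagata argument combined with Noetherian induction on $\dim Z$, using (4) first established for closed immersions to handle the $i_*i^!\Q_W$ term appearing in $f_*(g_*\Q_Z)=\bar p_*j_*\Q_Z$.
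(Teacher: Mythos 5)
Your reduction to generators, and your arguments for (1), for (3), and for the $f_!$ half of (2) (base change, monoidality of $f^*$, Nagata/Zariski factorisation plus the localisation triangle) are correct, and they are more self-contained than the paper's proof, which simply checks generators, cites Pépin-Lehalleur's Propositions 1.12 and 1.14 for $\mathrm{DM}$, and transfers the statements to $\mathrm{DN}$ via the commutation of $\rho_\mathrm{N}$ with the six operations (a transfer you do not need, since your arguments are formal in the six-functor formalism and apply verbatim to $\mathrm{DN}$).

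However, your main route for the $f_*$ part of (2) and for (4) has a genuine gap: $\Dd^{\mathrm{coh}}_c$ is \emph{not} stable under Verdier duality, so you cannot write $f_*=\mathbb{D}f_!\mathbb{D}$ and $f^!=\mathbb{D}f^*\mathbb{D}$ and conclude. Concretely, over $X=\Spec(k)$ and $g\colon Z\to\Spec(k)$ smooth proper of dimension $d$ one has $\mathbb{D}(g_*\Q_Z)\simeq g_*\Q_Z(d)[2d]$, which is the homological motive $M(Z)$; with resolution of singularities, $\mathrm{DM}^{\mathrm{coh}}_c(k)$ is the thick subcategory generated by the duals $M(Y)^\vee=M(Y)(-d)[-2d]$ of motives of smooth projective varieties, i.e.\ by duals of effective motives, and $M(\mathbb{P}^1)=\Q\oplus\Q(1)[2]$ does not lie in it because $\Q(-1)$ is not effective. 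Your projective bundle formula argument proves exactly the stability of $\Dd^{\mathrm{coh}}$ under \emph{negative} Tate twists $(-i)[-2i]$, whereas the duality computation produces \emph{positive} twists $(d)[2d]$, which leave the category; this is precisely why the cited results prove the $f_*$ and $f^!$ statements directly by dévissage (resolution of singularities, absolute purity for closed immersions between regular schemes, Noetherian induction), where only negative twists appear. The same positive-twist obstruction shows that for $f$ smooth of relative dimension $d>0$ one has $f^!\simeq f^*(d)[2d]$, so no argument of this shape can work for general $f$ in (4); the case that is actually needed and used later in the paper (and the generality of Pépin-Lehalleur's Proposition 1.12(iv)) is $f$ quasi-finite, e.g.\ a closed immersion — which is exactly the case your fallback sketch proposes to treat first. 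That fallback (Nagata plus the recollement triangle $i_*i^!\Q_W\to\Q_W\to j_*\Q_Z$ and induction on dimension) is the right idea, but as written it is only a sketch: establishing $i^!$-stability for closed immersions with possibly singular source is itself the inductive step requiring resolution of singularities and purity, so it should be carried out, not assumed.
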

  \begin{proof}
    In each case, it suffices to check that the functor sends generators to the correct category. In the case of $\mathrm{DM}$ this is \cite[Propositions 1.12 and 1.14]{MR3920833}, if one notes that we work in characteristic zero so that we even have resolutions by singularities. By commutation of $\rho_\mathrm{N}$ with the operations, the result for $\mathrm{DM}$ implies the result for $\mathrm{DN}$. Pépin-Lehalleur results are for separated morphisms but we can do a Zariski covering to reduce to this case.
  \end{proof}
  To make the statement of the next proposition clearer we have to 
  use that $\mathrm{DN}$ can be extended to all finite dimensional qcqs $\Q$-schemes as a functor that sends limit of schemes with affine transitions to colimits of presentable $\infty$-categories. This follows from the presentations of $\mathrm{DN}$ as modules in $\mathrm{DM}$ over an algebra $\mathscr{N}\in\mathrm{CAlg}(\mathrm{DM}(\Q))$, proven in \cite{SwannRealisation}, and continuity of $\mathrm{DM}$ as proven in \cite[Lemma 5.1]{MR4319065}. In this setting, all the definitions of \Cref{alldef} make sense.
  \begin{prop}
    Let $X=\lim_i X_i$ be a limit of finite dimension qcqs $\Q$-schemes with affine transitions. Let $\ccal$ be one of the categories defined in \Cref{alldef}, and denote by $\ccal_c$ its compact version.
    Then the natural functor 
    \[\colim_i\ccal(X_i)\to \ccal(X)\] 
    (\emph{resp.} \[\colim_i\ccal_c(X_i)\to\ccal_c(X_i))\]
    is an equivalence in $\mathrm{Pr}^\mathrm{L}$ (\emph{resp.} in $\catinfty$.)
  \end{prop}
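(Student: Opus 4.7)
The plan is to reduce the statement to the continuity of the ambient categories $\mathrm{DM}$ and $\mathrm{DN}$ (which is available for $\mathrm{DM}$ by \cite[Lemma 5.1]{MR4319065} and for $\mathrm{DN}$ from the module-theoretic presentation over $\mathrm{DM}$ proved in \cite{SwannRealisation}), and then to check that each of the localising subcategories of \Cref{alldef} descends compatibly along the inverse system. By \Cref{stabop}, which extends to maps of finite dimensional qcqs $\Q$-schemes by continuity, the pullback functors along the affine transitions $X_i\to X_j$ preserve each subcategory $\ccal$ and its compact part $\ccal_c$. Hence the transitions restrict to the subcategories and yield natural functors
\[\Phi\colon \colim_i \ccal(X_i)\to \ccal(X)\ \text{in }\PrL, \qquad \Phi_c\colon \colim_i \ccal_c(X_i)\to \ccal_c(X)\ \text{in }\catinfty.\]
Since $\ccal(X_i)\simeq \Ind(\ccal_c(X_i))$ and Ind-completion commutes with filtered colimits of small idempotent-complete stable $\infty$-categories, the two assertions are equivalent, and I focus on $\Phi_c$.

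For essential surjectivity, since $\ccal_c(X)$ is the thick subcategory of $\mathrm{D}(X)$ generated by the $f_*\Q_Y$ with $f\colon Y\to X$ of the appropriate type (finite, finite étale, or proper), it suffices to realise such generators in the image of $\Phi_c$. Because the transitions in the inverse system are affine, the classical Grothendieck limit theorems guarantee that every such morphism $f\colon Y\to X$ descends to a morphism $f_i\colon Y_i\to X_i$ of the same type for $i$ large enough, with $Y\simeq Y_i\times_{X_i}X$. Proper base change, which is part of the six-functor formalism common to $\mathrm{DM}$ and $\mathrm{DN}$, then gives $g_i^*(f_i)_*\Q_{Y_i}\simeq f_*\Q_Y$ where $g_i\colon X\to X_i$ is the canonical morphism, placing $f_*\Q_Y$ in the essential image of $\Phi_c$.

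Full faithfulness then follows from the fully faithful embeddings $\ccal_c(X_i)\hookrightarrow \mathrm{D}_c(X_i)$ and $\ccal_c(X)\hookrightarrow \mathrm{D}_c(X)$: mapping spaces in $\colim_i \ccal_c(X_i)$ between two objects descending from a common stage are filtered colimits of mapping spaces in the $\ccal_c(X_j)$, which equal the corresponding filtered colimits in $\mathrm{D}_c(X_j)$, and these compute the mapping space in $\mathrm{D}_c(X)$ by continuity of $\mathrm{D}$, hence also in $\ccal_c(X)$ by full faithfulness. Combined with the essential surjectivity just proved on generators, this yields that $\Phi_c$ is an equivalence.

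The case $\ccal=\Dd^{0,\mathrm{rig}}$ requires a separate argument, since this category is defined by dualisability rather than by generators. Using the equivalence $\colim_i \Dd^0_c(X_i)\simeq \Dd^0_c(X)$ of symmetric monoidal $\infty$-categories just established, and the fact that dualisability is a property witnessed by finite data (a dual object together with unit and counit maps satisfying the triangle identities) that descends through filtered colimits, one concludes that pullback preserves and reflects dualisability well enough to identify $\colim_i \Dd^{0,\mathrm{rig}}(X_i)\simeq \Dd^{0,\mathrm{rig}}(X)$. I expect the main technical subtlety to be the approximation step in essential surjectivity, where the Grothendieck limit theorems have to be invoked with careful control on the type of each morphism, and where the identification of the pushforwards rests on proper base change in the ambient six-functor formalism.
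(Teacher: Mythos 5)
Your proof is correct and follows essentially the same route as the paper: full faithfulness is deduced from continuity of the ambient categories $\mathrm{DM}$ and $\mathrm{DN}$, essential surjectivity of the compact subcategories from descent of the generating morphisms (finite, finite étale, proper) to a finite stage together with base change, and the rigid case from the fact that dualisability data is finite and hence reached at a finite level. The only difference is organisational (you treat the rigid case after the compact Artin case, using the equivalence for $\Dd^0_c$, whereas the paper treats it first via the ambient category), which does not change the substance of the argument.
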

  \begin{proof}
    We first deal with the case of rigid objects. In that case the colimit is in $\catinfty$. Fully faithfulness follows from the fully faithfulness for the ambient category $\mathrm{DM}_c$ and $\mathrm{DN}_c$. Essential surjectivity follows from the fact that the dualisability data is a finite data, so that each part of the data (the dual, the evaluation, the co-evaluation \emph{etc.}) can be reach at a finite level $X_i$ by the essential surjectivity on the ambient category.

    For the other categories, it suffices to deal with the compact versions. Fully faithfulness is as above, a consequence of fully faithfulness of the ambient category. Essential surjectivity follows from the approximation properties of finite type properties, as in \cite[Lemma 1.24]{MR3920833}.
  \end{proof}
  \begin{cor}
    \label{lemmaSm}
    Let $k$ be a field of characteristic zero and let $X$ be a finite type $k$-scheme. Let $M\in\mathrm{DN}^0_c(X)$ (\emph{resp.} $M\in\mathrm{DM}^0_c(X)$). Then there exits a dense open subset $U\subset X$ such that the restriction $M_{\mid U}$ is in $\mathrm{DN}^{0,\mathrm{sm}}_c(U)$ (\emph{resp.} in $\mathrm{DM}^{0,\mathrm{sm}}_c(U)$).
  \end{cor}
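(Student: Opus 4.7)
The plan is to exploit that the conclusion defines a thick subcategory of $\Dd^0_c(X)$, reduce to the case $M=f_*\Q_Y$ for $f\colon Y\to X$ finite, and invoke generic étaleness in characteristic zero. I treat $\mathrm{DN}$ and $\mathrm{DM}$ simultaneously and denote by $\Dd^0_c$ either category.

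First, let $\scal\subset \Dd^0_c(X)$ be the full subcategory of objects $M$ for which there exists a dense open $U\subset X$ with $M_{\mid U}\in \Dd^{0,\mathrm{sm}}_c(U)$. Since $X$ is noetherian, a finite intersection of dense opens is dense (it meets each irreducible component in a nonempty intersection of nonempty opens inside an irreducible space); restriction along an open immersion is exact and preserves retracts (and preserves smooth Artin motives by \Cref{stabop}); and $\Dd^{0,\mathrm{sm}}_c(U)$ is thick. Combining these observations, $\scal$ is stable under fibers, cofibers and retracts, hence is a thick subcategory of $\Dd^0_c(X)$. Since the latter is by definition the smallest thick subcategory containing the generators $f_*\Q_Y$ for $f\colon Y\to X$ finite, it suffices to show that each such generator lies in $\scal$.

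Fix $f\colon Y\to X$ finite. Using the invariance of rational étale motives under the nil-immersion $Y_{\mathrm{red}}\hookrightarrow Y$ in characteristic zero (which transfers from $\mathrm{DM}$ to $\mathrm{DN}$ via the six-functor-compatible realisation $\rho_\mathrm{N}$), I reduce to the case where $Y$ is reduced. Decomposing $Y$ into its finitely many irreducible components and treating each separately (using again that finite intersections of dense opens are dense), I further reduce to the case where both $Y$ and $X$ are integral and $f$ is dominant. The generic fibre $Y_\eta$ is then the spectrum of a finite extension of $k(X)$; since we are in characteristic zero this extension is separable, so $Y_\eta\to\Spec k(X)$ is étale. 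By generic flatness together with the openness of the étale locus for finite flat morphisms, there exists a dense open $U\subset X$ such that $f^{-1}(U)\to U$ is finite étale. Proper base change along the open immersion $j_U\colon U\to X$ and the (proper) finite morphism $f$ then yields
\[
j_U^*f_*\Q_Y \simeq (f_{\mid f^{-1}(U)})_*\Q_{f^{-1}(U)} \in \Dd^{0,\mathrm{sm}}_c(U),
\]
which completes the proof.

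The main obstacle I anticipate is the nilpotent invariance step in the Nori case: one must know that the closed nil-immersion $Y_{\mathrm{red}}\hookrightarrow Y$ induces an equivalence on rational Nori motives, which is what permits the replacement of $Y$ by $Y_{\mathrm{red}}$. This is standard for $\mathrm{DM}$ and transfers to $\mathrm{DN}$ via $\rho_\mathrm{N}$ and its compatibility with the six operations; once this is granted, the remaining steps are routine applications of scheme-theoretic spreading out and proper base change already exploited elsewhere in the paper.
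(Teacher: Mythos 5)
Your argument is correct, but it is not the paper's: the paper deduces the statement in one line from the two results immediately preceding it, namely that over a characteristic-zero field every Artin motive is smooth ($\Dd^0_c(K)=\Dd^{0,\mathrm{sm}}_c(K)$) together with the continuity proposition, which identifies $\mathrm{D}^{0,\mathrm{sm}}_c$ at the scheme of generic points $\eta=\lim_U U$ with the colimit of the $\mathrm{D}^{0,\mathrm{sm}}_c(U)$ over dense opens, so that the (automatically smooth) restriction $M_{\mid\eta}$ spreads out to a dense open. You instead argue by dévissage: the objects satisfying the conclusion form a thick subcategory, so it suffices to treat the generators $f_*\Q_Y$ with $f$ finite, and there you invoke nil-invariance, generic étaleness of finite morphisms in characteristic zero, and proper (finite) base change $j_U^*f_*\simeq (f_{\mid f^{-1}(U)})_*$. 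This buys a more elementary, purely geometric proof that does not use the continuity proposition at all, at the cost of redoing by hand the spreading-out that continuity packages; in both proofs characteristic zero enters through separability of the generic residue extensions.

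Two steps deserve to be made precise, though neither is a real obstruction. First, "treating each irreducible component separately'' is not literally a reduction: $\Q_Y$ is not the direct sum of the pushforwards of the constant sheaves on the components when they meet; you must first discard the image $f(T)$ of the pairwise intersections $T\subset Y$, which is closed and contains no generic point of $X$ (a dimension count using finiteness of $f$), so that over the dense open $X\setminus f(T)$ the decomposition does hold. Similarly, after replacing $Y$ by $Y_{\mathrm{red}}$ you must also replace $X$ by $X_{\mathrm{red}}$ (no map from a reduced scheme is étale onto a non-reduced open), and then use topological invariance of the finite étale site to identify $\Dd^{0,\mathrm{sm}}_c(U)$ with $\Dd^{0,\mathrm{sm}}_c(U_{\mathrm{red}})$ under the equivalence $\Dd(U)\simeq\Dd(U_{\mathrm{red}})$. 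Finally, the nil-invariance you flag as the main obstacle in the Nori case is not one: for a closed immersion $i\colon Y_{\mathrm{red}}\to Y$ with empty open complement, the localisation triangle (which $\mathrm{DN}$ possesses and which the paper already uses in the proof of \Cref{thmA}) gives $\mathrm{id}\simeq i_*i^*$ directly, so no transfer from $\mathrm{DM}$ via $\rho_\mathrm{N}$ is needed.
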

  \begin{proof}
    Indeed because over a field we have that all $0$-motives are smooth, the continuity of smooth motives around the scheme of generic points of $X$ provides the wanted $U$.
  \end{proof}
  The same proofs as \cite[Proposition 1.19 and Proposition 1.25]{MR3920833} give :
\begin{prop}
	\label{stabnmot}
  Let $k$ be a field of characteristic zero and let $X$ be a finite type $k$-scheme and let $M\in\mathrm{DN}_c(X)$. The following are equivalent :
	\begin{enumerate}
		\item $M\in \mathrm{DN}_c^\mathrm{coh}(X)$ (\emph{resp.} in $\mathrm{DN}_c^{0}(X)$)
		\item There exists a closed subset $i:Z\to X$ with open complement $j:U\to X$ such that $j^*M\in \mathrm{DN}_c^\mathrm{coh}(U)$ (\emph{resp.} in $\mathrm{DN}_c^{0}(U)$)
		      and $i^*M \in \mathrm{DN}_c^\mathrm{coh}(Z)$ (\emph{resp.} in $\mathrm{DN}_c^{0}(Z)$).
		\item For all $x\in X$, $x^*M \in \mathrm{DN}_c^\mathrm{coh}(\kappa(x))$ (\emph{resp.} in $\mathrm{DN}_c^{0}(\kappa(x))$)
	\end{enumerate}
  The same result holds with $\mathrm{DN}$ replaced by $\mathrm{DM}$.
\end{prop}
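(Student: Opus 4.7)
My plan is to proceed by Noetherian induction on the dimension of $X$, mirroring Pépin-Lehalleur's strategy in \cite[Propositions 1.19 and 1.25]{MR3920833}. The implication $(1)\Rightarrow(2)$ is immediate from \Cref{stabop}, which ensures that $i^*$ and $j^*$ preserve both $\mathrm{DN}_c^\mathrm{coh}$ and $\mathrm{DN}_c^0$. For $(2)\Rightarrow(3)$ one iterates the stratification: given any $x\in X$, a finite refinement of $(U,Z)$ eventually exhibits $x$ as a generic point of one of the strata, where hypothesis (2) directly gives the conclusion.

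The substantive direction is $(3)\Rightarrow(1)$. Up to working on each irreducible component separately, I may assume $X$ is irreducible with generic point $\eta$. Condition (3) yields $\eta^*M$ in the relevant subcategory of $\mathrm{DN}_c(\kappa(\eta))$. Since $\Spec(\kappa(\eta))$ is the cofiltered limit of the dense opens $U\subset X$ with affine transitions, the continuity proposition proved just above produces a dense open $j\colon U\hookrightarrow X$ such that $j^*M$ lies in the appropriate subcategory over $U$. Let $i\colon Z\hookrightarrow X$ be the reduced closed complement; it has strictly smaller dimension, and hypothesis (3) still holds for $i^*M$ since $x^*i^*M\simeq x^*M$ for every $x\in Z$. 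By the induction hypothesis, $i^*M$ lies in the relevant subcategory over $Z$.

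To conclude I invoke the localisation cofiber sequence
\[ j_!j^*M \longrightarrow M \longrightarrow i_*i^*M. \]
By \Cref{stabop}, the functor $j_!$ preserves both $\mathrm{DN}_c^\mathrm{coh}$ and $\mathrm{DN}_c^0$ (the latter because the open immersion $j$ is quasi-finite), and $i_*$ preserves both as the closed immersion $i$ is finite, so $i_*\simeq i_!$. Since the subcategories at play are thick and stable, $M$ itself lies in the appropriate one. The same proof applies verbatim with $\mathrm{DM}$ in place of $\mathrm{DN}$, using the exact same statements of \Cref{stabop} and the continuity proposition. I do not expect a serious obstacle beyond the orderly setup of the induction; the only point requiring care is to invoke the continuity proposition so that the generic hypothesis on $\eta^*M$ genuinely spreads out to a dense open of $X$, rather than only to the pro-object of dense opens.
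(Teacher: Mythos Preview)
Your proposal is correct and follows the same strategy the paper invokes by citing \cite[Propositions 1.19 and 1.25]{MR3920833}; the paper gives no independent argument beyond that reference, and you have faithfully reconstructed it. Two cosmetic remarks: for $(2)\Rightarrow(3)$ there is no need to refine until $x$ is a generic point, since $x^*M$ factors through either $j^*M$ or $i^*M$ and \Cref{stabop}(1) finishes immediately; and rather than reducing to the irreducible case, it is cleaner to run Noetherian induction on proper closed subschemes directly, picking any generic point of $X$ and spreading out from there, which avoids having to justify how the irreducible components interact.
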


Recall that the ordinary t-structure on $X$ is the unique t-structure on $\mathrm{DN}_c(X)$ such that all pullback functors are t-exact. It can be constructed from the perverse t-structure using gluing (see \cite[Remarks 4.6]{MR1047415}). We denote by $\Mc(X)$ its heart.
\begin{cor}
  Let $k$ be a field of characteristic zero and let $X$ be a finite type $k$-scheme. Then the ordinary t-structure on $\mathrm{DN}_c(X)$ restricts to  $\mathrm{DN}^0_c(X)$, $\mathrm{DN}^{0,\mathrm{rig}}(X)$ and $\mathrm{DN}^\mathrm{coh}_c(X)$. 
\end{cor}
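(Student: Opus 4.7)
The strategy is to reduce everything to the field case via the pointwise criterion \Cref{stabnmot}. Recall that by definition the ordinary t-structure on $\mathrm{DN}_c(X)$ is the unique t-structure making every pullback $x^*:\mathrm{DN}_c(X)\to\mathrm{DN}_c(\kappa(x))$ along a point $x\in X$ t-exact, so for any $M\in\mathrm{DN}_c(X)$ we have $x^*\tau^{\leqslant 0}M\simeq \tau^{\leqslant 0}(x^*M)$ and similarly for $\tau^{>0}$.

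First I handle the field case. For $\ccal=\mathrm{DN}^\mathrm{coh}_c$, the restriction of the ordinary t-structure to $\ccal(k)$ is exactly the content of \Cref{tstrctufields}. For $\ccal=\mathrm{DN}^0_c$, the theorem of Ayoub--Barbieri-Viale--Huber--M\"uller-Stachs--Orgogozo--Pepin-Lehalleur stated at the beginning of this section identifies $\mathrm{DN}^0_c(k)$ with $\Dd^b(\mathrm{Rep}^A_\Q(\Gal(\overline{k}/k)))$ via a functor whose composition with the inclusion into $\Dd^b(\mcal(k))$ is t-exact, so the ordinary t-structure on $\mathrm{DN}^0_c(k)$ is the standard one and has heart $\mathrm{Rep}^A_\Q(\Gal(\overline{k}/k))$. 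Over a field every Artin motive is automatically smooth, hence dualisable, so the same argument proves the statement for $\mathrm{DN}^{0,\mathrm{rig}}(k)=\mathrm{DN}^0_c(k)$.

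Now for general $X$ and $\ccal\in\{\mathrm{DN}^0_c,\mathrm{DN}^\mathrm{coh}_c\}$, let $M\in\ccal(X)$. For each $x\in X$, \Cref{stabnmot} gives $x^*M\in\ccal(\kappa(x))$, so by the field case $x^*\tau^{\leqslant 0}M\simeq\tau^{\leqslant 0}(x^*M)\in\ccal(\kappa(x))$. Applying the direction (3)$\Rightarrow$(1) of \Cref{stabnmot} again, $\tau^{\leqslant 0}M\in\ccal(X)$, and similarly for $\tau^{>0}M$.

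The main obstacle is the rigid case $\mathrm{DN}^{0,\mathrm{rig}}(X)$, for which \Cref{stabnmot} does not directly apply. Given $M\in\mathrm{DN}^{0,\mathrm{rig}}(X)\subset\mathrm{DN}^0_c(X)$, the previous step already ensures $\tau^{\leqslant 0}M,\tau^{>0}M\in\mathrm{DN}^0_c(X)$; what remains is to check dualisability in $\mathrm{DN}(X)$. The plan is to argue this pointwise, taking advantage of the fact that over any field every compact Artin motive is dualisable: one first notes that for each geometric point $x$ the pullback $x^*\tau^{\leqslant 0}M$ is automatically rigid, and then combines this with the second main theorem to transfer dualisability through a suitable stratification of $X$ by normal locally closed subschemes (over each connected normal piece, the second theorem identifies rigid and smooth Artin motives and provides the restriction of the ordinary t-structure with heart $\mathrm{Rep}^A_\Q(\pi_1^{\et})$). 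Gluing via the localisation sequence on successive normal strata and a Noetherian induction on $\dim X$ then yields the result on $X$ itself.
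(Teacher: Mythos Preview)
Your treatment of $\mathrm{DN}^0_c$ and $\mathrm{DN}^{\mathrm{coh}}_c$ is exactly the paper's argument: reduce to fields via the pointwise criterion of \Cref{stabnmot}, then invoke \Cref{tstrctufields} (and its Artin analogue). Nothing to add there.

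The rigid case, however, has a real gap. Two problems:

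\emph{Circularity.} You appeal to the ``second main theorem'' (that smooth and rigid Artin motives agree over a normal base) to handle $\mathrm{DN}^{0,\mathrm{rig}}$. But that theorem is proved \emph{after} this corollary, and its proof (see \Cref{pi1etale} and the corollaries following it) repeatedly uses the notation $\mathrm{DN}^{0,\mathrm{rig}}_c(S)^\heartsuit$ and the fact that ``the t-structure restricts to dualisable objects''. So you are invoking a result that depends on the very statement you are trying to prove.

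\emph{Dualisability does not glue.} Even ignoring the circularity, your stratification-and-gluing sketch cannot work. Dualisability is a global condition: an object can be dualisable on every stratum of a stratification and fail to be dualisable on $X$. For a concrete example, take $X=\mathbb{A}^1_k$, $j\colon \mathbb{G}_m\hookrightarrow X$, and $M=j_!\Q_{\mathbb{G}_m}$. Then $M$ restricts to $\Q$ on $\mathbb{G}_m$ and to $0$ on $\{0\}$, both dualisable, yet $M$ itself is not dualisable in $\mathrm{DN}(X)$. So knowing that $x^*\tau^{\leqslant 0}M$ is rigid for every point $x$, or that its restriction to each normal stratum is smooth, tells you nothing about dualisability of $\tau^{\leqslant 0}M$ on $X$.

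The paper's argument for the rigid case is entirely different and much shorter: it uses that the tensor product on $\mathrm{DN}_c(X)$ is t-exact in both variables for the ordinary t-structure. This forces the t-structure to restrict to the full subcategory of dualisable objects (hence in particular to $\mathrm{DN}^{0,\mathrm{rig}}(X)=\mathrm{DN}^0_c(X)\cap\mathrm{DN}^{\mathrm{rig}}(X)$, once you know it restricts to $\mathrm{DN}^0_c(X)$). No stratification, no forward reference, no Noetherian induction is needed.
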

\begin{proof}
  It restricts to $\mathrm{DN}^\mathrm{rig}(X)$ because the tensor product is t-exact in both variables. We prove that the t-structure restricts to $\mathrm{DN}^\mathrm{coh}_c(X)$, the case of Artin motives being similar. Let $M\in\mathrm{DN}^\mathrm{coh}_c(X)$. We want to show that $\HH^0(M)$ is still a cohomological motive. By the above \Cref{stabnmot} it suffices to prove that for all $x\in X$ the object $x^*\HH^0(M)$ is cohomological. This follows from t-exactness of $x$ and the case of fields \Cref{tstrctufields}.
\end{proof}

\subsection{The equivalence of Artin motives.}
In this section we prove that the functor $\rho_\mathrm{N}$ induces an equivalence on $0$-motives. 
We begin by the following computation:
\begin{prop}
  \label{etcoh}
  Let $k$ be a field of characteristic zero and let $X$ be a smooth $k$-scheme. Let $f\colon Y\to X$ be a finite étale map and $n\in\Z$ be an integer.
  Then the maps 
  \[\HH^n(Y_{\'et},\Q)\to \Hom_{\mathrm{DM}(X)}(\Q_X,f_*\Q_Y[n])\to \Hom_{\mathrm{DN}(X)}(\Q_X,f_*\Q_Y[n])\]
  induced by the functors 
  \[\Dd(X_{\'et},\Q)\to\mathrm{DM}(X)\to\mathrm{DN}(X)\] 
  are both equivalences.
\end{prop}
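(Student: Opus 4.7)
The plan is first to reduce both maps to the case $f=\id_Y$ by $(f^*,f_*)$-adjunction, and then to treat each map separately. Each of the three $\infty$-categories $\Dd(-_{\et},\Q)$, $\mathrm{DM}(-)$ and $\mathrm{DN}(-)$ carries an $(f^*,f_*)$-adjunction, and the two comparison functors commute with the six operations and preserve the tensor unit. Since $f$ is étale, $f^*\Q_X\simeq\Q_Y$, so each of the three Hom-groups in the statement identifies with $\Hom_{?(Y)}(\Q_Y,\Q_Y[n])$. As $Y$ is smooth (being étale over smooth $X$), it is enough to show that for any smooth $k$-scheme $Y$ the two maps
\[\HH^n(Y_{\et},\Q)\to\Hom_{\mathrm{DM}(Y)}(\Q_Y,\Q_Y[n])\to\Hom_{\mathrm{DN}(Y)}(\Q_Y,\Q_Y[n])\]
are equivalences.

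For the first map I would invoke the standard comparison between rational étale cohomology of a smooth $k$-scheme and its rational étale motivic cohomology in weight zero: the constant sheaf $\Q$ is $\A^1$-local on $\Sm_Y$ by $\A^1$-homotopy invariance of rational étale cohomology on regular schemes, so the comparison $\Dd(Y_{\et},\Q)\to\mathrm{DM}(Y)$ is fully faithful at the tensor unit. This is a well-known consequence of Ayoub's rigidity, or equivalently of Cisinski--Déglise's theorem on rational étale motives.

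For the second map, I apply $(\pi^*,\pi_*)$-adjunction with $\pi\colon Y\to\Spec(k)$ to reduce to showing that $\Hom_{\mathrm{DM}(k)}(\Q_k,\pi_*\Q_Y[n])\to \Hom_{\mathrm{DN}(k)}(\Q_k,\pi_*\Q_Y[n])$ is an equivalence, and then show that $\pi_*\Q_Y$ is a \emph{cohomological} motive over $k$. By Hironaka, fix a smooth compactification $j\colon Y\hookrightarrow\bar Y$ with closed complement $i\colon D\hookrightarrow\bar Y$. The localisation fibre sequence $i_*i^!\Q_{\bar Y}\to\Q_{\bar Y}\to j_*\Q_Y$ on $\bar Y$ presents $j_*\Q_Y$ as a cofibre of a map between cohomological motives on $\bar Y$: $\Q_{\bar Y}$ is cohomological (it equals $(\id_{\bar Y})_*\Q_{\bar Y}$), and $i_*i^!\Q_{\bar Y}$ is too by \Cref{stabop}(2) and (4). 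Thus $j_*\Q_Y\in\mathrm{DM}^\mathrm{coh}(\bar Y)$, and $\pi_*\Q_Y=\bar\pi_*j_*\Q_Y\in\mathrm{DM}^\mathrm{coh}(k)$ again by \Cref{stabop}(2). Since $\Q_k$ is Artin, the Hom factors through the Artin truncation $\omega^0$, which commutes with $\rho_\mathrm{N}$ by \Cref{compatiOmega0}; the resulting map is then the one induced by the equivalence $\mathrm{DM}^0(k)\simeq\mathrm{DN}^0(k)$ on the Artin motive $\omega^0\pi_*\Q_Y$, and so is an equivalence. The main obstacle is precisely the cohomologicality of $\pi_*\Q_Y$, which relies crucially on resolution of singularities and on the preservation of cohomological motives under $i^!$ (\Cref{stabop}(4)); everything after that is formal adjunction combined with the known field case recalled in the introduction.
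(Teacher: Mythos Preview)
Your argument is correct, and for the first map it coincides with the paper's: both reduce by $(f^*,f_*)$-adjunction and invoke the standard identification of rational \'etale cohomology with weight-zero rational \'etale motivic cohomology on smooth schemes (you phrase it as $\A^1$-locality of $\Q$; the paper cites Ruimy).

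For the second map the two proofs diverge. The paper argues directly: for $n<0$ the target vanishes by the ordinary t-structure, for $n>0$ it vanishes for weight reasons, and for $n=0$ one reduces by adjunction to $\End_{\mathrm{DN}(X)}(\Q_X)=\Q$, which is read off from the Betti realisation (plus continuity for general $k$). Your route instead pushes down to $\Spec(k)$ via $(\pi^*,\pi_*)$, shows $\pi_*\Q_Y$ is cohomological, and then applies the Artin truncation $\omega^0$ together with \Cref{compatiOmega0} to reduce to the field equivalence $\mathrm{DM}^0(k)\simeq\mathrm{DN}^0(k)$. This is valid and pleasantly uniform in $n$, but it trades the paper's short local computation for the heavier machinery of \Cref{stabop} and \Cref{compatiOmega0}. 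One simplification: your Hironaka detour is unnecessary, since \Cref{stabop}(2) already says $\pi_*$ preserves cohomological motives, so $\pi_*\Q_Y\in\mathrm{DM}^{\mathrm{coh}}(k)$ follows immediately from $\Q_Y=(\id_Y)_*\Q_Y\in\mathrm{DM}^{\mathrm{coh}}(Y)$; the compactification, the localisation sequence, and the appeal to \Cref{stabop}(4) can all be dropped.
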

\begin{proof}
  We can assume $X$ connected.
  The computation for Voevodsky motives follows from \cite[Proposition 1.7.2 and Corollary 2.1.4]{ruimyAbelianCategoriesArtin2023}. For Nori motives, the cases $m\neq 0$ are easy: both the étale cohomology and the $\Hom$ group are vanishing, because of t-structure ($m<0$) and weight ($m>0$) reasons. For $m=0$ by adjunction we reduced to the case $f=\mathrm{Id}$ and it follows from the fact that the group of endomorphisms of the unit object is $\Q$ (this is clear if $k$ is a subfield of $\C$ because it is a nonzero $\Q$-vector space that embeds in $\Q=\mathrm{End}_{\mathrm{Shv}(X(\C))}(\Q_X)$, and then follows by continuity for general fields).
\end{proof}
\begin{cor}
  \label{0motsmooo}
  Let $k$ be a field of characteristic zero and let $X$ be a smooth $k$-scheme.
  The functor 
  \[\mathrm{DM}^{0,\mathrm{sm}}(X)\to \mathrm{DN}^{0,\mathrm{sm}}(X)\] induced by $\rho_\mathrm{N}$ is an equivalence.
\end{cor}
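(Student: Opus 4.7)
The plan is to deduce the equivalence from the Hom-computation of \Cref{etcoh} by a standard argument on compactly generated presentable $\infty$-categories. First I would note that, by definition, the functor $F:=\rho_\mathrm{N}\colon \mathrm{DM}^{0,\mathrm{sm}}(X)\to \mathrm{DN}^{0,\mathrm{sm}}(X)$ is colimit-preserving (the ambient $\rho_\mathrm{N}$ lies in $\mathrm{CAlg}(\mathrm{Pr}^\mathrm{L})$), and both source and target are compactly generated by the objects $f_*\Q_Y$ for $f\colon Y\to X$ finite étale. Since $F$ hits this set of generators of the target on the nose, essential surjectivity will follow automatically once we know $F$ is fully faithful. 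Moreover, since the $f_*\Q_Y$ are compact in the ambient $\mathrm{DM}(X)$ and $\mathrm{DN}(X)$ (finite étale pushforwards preserve compact objects), they are compact in the localising subcategories, and fully faithfulness of $F$ reduces by colimit-preservation in both variables to checking that for all finite étale maps $f\colon Y\to X$ and $g\colon Z\to X$ and all $n\in\Z$, the induced map
\[\Hom_{\mathrm{DM}(X)}(f_*\Q_Y,g_*\Q_Z[n])\longrightarrow \Hom_{\mathrm{DN}(X)}(f_*\Q_Y,g_*\Q_Z[n])\]
is an isomorphism.

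The key computation would go as follows. Since $f$ is finite étale, $f_*\Q_Y$ is self-dual in both $\mathrm{DM}(X)$ and $\mathrm{DN}(X)$ (as already used in the introduction), so by duality,
\[\Hom(f_*\Q_Y,g_*\Q_Z[n])\simeq \Hom(\Q_X, f_*\Q_Y\otimes g_*\Q_Z[n]).\]
Using the projection formula and proper (even finite étale) base change along the cartesian square
\[\begin{tikzcd} Y\times_X Z \ar[r,"q"]\ar[d,"p"'] & Z \ar[d,"g"]\\ Y \ar[r,"f"'] & X, \end{tikzcd}\]
I get $f_*\Q_Y\otimes g_*\Q_Z\simeq f_*f^*g_*\Q_Z\simeq f_*p_*\Q_{Y\times_X Z}\simeq h_*\Q_{Y\times_X Z}$, where $h:=f\circ p\colon Y\times_X Z\to X$ is again finite étale since finite étale maps are stable under composition and base change. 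These manipulations are intertwined by $\rho_\mathrm{N}$ since $\rho_\mathrm{N}$ commutes with the six operations.

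At this point, both Hom groups have been rewritten as $\Hom_{\D(X)}(\Q_X,h_*\Q_{Y\times_X Z}[n])$, and \Cref{etcoh} applied to the finite étale map $h$ identifies them both with $\HH^n((Y\times_X Z)_{\'et},\Q)$ compatibly. This gives fully faithfulness on compact generators, hence on the whole categories, concluding the proof. The only subtle point is keeping track that every rewriting above is natural and compatible with the realisation $\rho_\mathrm{N}$, but this is exactly what its construction as a symmetric monoidal functor commuting with the six operations delivers.
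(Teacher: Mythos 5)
Your proof is correct and follows essentially the same route as the paper's: reduce by dévissage to the mapping spectra between the compact generators $f_*\Q_Y$ and $g_*\Q_Z$, then identify both sides via \Cref{etcoh} applied to a finite étale map over a smooth base. The only cosmetic difference is that you stay over $X$, using self-duality of $f_*\Q_Y$ and the projection formula to rewrite everything as $\Hom(\Q_X,h_*\Q_{Y\times_X Z}[n])$, whereas the paper uses $g_*=g_!$, $g^!=g^*$ and base change to move the computation to $Z$ (which is again smooth); both are the same six-functor manipulation and land on the same computation.
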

\begin{proof}
  It suffices to show that it is fully faithful on the compact objects, and even, as compact objects are the idempotent completion of the stable category generated by the $f_*\Q_Y$ for $f\colon Y\to X$ finite and étale, it suffices to prove that it is fully faithful on the smallest stable subcategory of $\mathrm{DM}(X)$ that contains the $f_*\Q_Y$ for $f\colon Y\to X$. By dévissage, it suffices to prove that for all $m\in\Z$, $f\colon Y\to X$ and $g\colon Z\to X$ both finite étale, the map 
  \[\Hom_{\mathrm{DM}(X)}(g_*\Q_Z,f_*\Q_Y[n])\to \Hom_{\mathrm{DN}(X)}(g_*\Q_Z,f_*\Q_Y[n])\] is an equivalence. Because $g_*=g_!$ and $g^!=g^*$, by adjunction and proper base change we can assume that $g=\mathrm{Id}_X$ and replace $Y$ by $Y\times_X Z$. In this case, the result is the above computation done in \Cref{etcoh}.
\end{proof}

We now deal with non necessarily smooth $0$-motives. For this, we will need the truncation functor of Vaish (see also \cite{MR4033829}) over a general basis.
\begin{prop}
	\label{weightTruncation}
  Let $k$ be a field of characteristic zero and let $X$ be a finite type $k$-scheme.
	The inclusion $\mathrm{DN}^0_c(X)\subset \mathrm{DN}^{\mathrm{coh}}_c(X)$ admits a right adjoint $\omega^0$. Moreover, $\rho_\mathrm{N}$ commutes with $\omega^0$.
\end{prop}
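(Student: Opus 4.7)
The strategy is to produce $\omega^0$ on the big presentable category $\mathrm{DN}^{\mathrm{coh}}(X)$ via Lurie's adjoint functor theorem, since the inclusion $\mathrm{DN}^0(X)\hookrightarrow\mathrm{DN}^{\mathrm{coh}}(X)$ of a localising subcategory preserves all colimits. The analogous right adjoint $\omega^0_{\mathrm{DM}}$ on Voevodsky motives is due to Pépin-Lehalleur \cite{MR3920833}, and the commuting square of inclusions with $\rho_\mathrm{N}$ produces by the mate calculus a canonical natural transformation $\rho_\mathrm{N}\omega^0_{\mathrm{DM}}\to\omega^0_{\mathrm{DN}}\rho_\mathrm{N}$. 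I would then prove simultaneously that $\omega^0_{\mathrm{DN}}$ preserves compact objects and that this mate is an equivalence, by Noetherian induction on $\dim X$; the base case of a field is exactly Propositions \ref{weightTruncationfield} and \ref{compatiOmega0}.

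The key commutation relations driving the induction come from the mate calculus applied to the commuting squares where $j^*$ or $i^*$ acts on the inclusions $\mathrm{DN}^0\hookrightarrow\mathrm{DN}^{\mathrm{coh}}$: since the left adjoints $j_!$ and $i^*$ themselves preserve Artin and cohomological motives by \Cref{stabop}, the mate for the right adjoints is an equivalence, yielding
$$j^*\omega^0_X\simeq\omega^0_U j^*,\qquad i^*\omega^0_X\simeq\omega^0_Z i^*.$$
Combined with the recollement, these in turn produce $\omega^0_X j_!\simeq j_!\omega^0_U$ (the natural map $j_!\omega^0_U N'\to\omega^0_X j_! N'$, built from the counit $\omega^0_U N'\to N'$ and the universal property of $\omega^0_X$, is an equivalence by conservativity of $(j^*,i^*)$) and $\omega^0_X i_*\simeq i_*\omega^0_Z$ (direct Yoneda argument using that $i_*$ preserves Artin motives and the adjunction $i^*\dashv i_*$).

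For the inductive step, take a smooth dense open $j\colon U\hookrightarrow X$ (available in characteristic zero) with closed complement $i\colon Z\hookrightarrow X$ of strictly smaller dimension. Applying $\omega^0_X$ to the localisation fiber sequence $j_!j^*M\to M\to i_*i^*M$ and using the commutations above, $\omega^0_X M$ sits in the fiber sequence
$$j_!\omega^0_U(j^*M)\to\omega^0_X M\to i_*\omega^0_Z(i^*M).$$
By induction the $Z$-term is a compact Artin motive and is $\rho_\mathrm{N}$-compatible, so the problem reduces entirely to the case of the smooth scheme $U$. By dévissage the smooth case reduces to computing $\omega^0 f_*\Q_Y$ for $f\colon Y\to U$ proper: using the relative Stein factorisation $Y\map{\alpha} Y'\map{g} U$ with $g$ finite, the natural map $g_*\Q_{Y'}\to f_*\Q_Y$ has Artin source, and one shows it realises $\omega^0 f_*\Q_Y$ by checking pointwise, via the mate equivalence $x^*\omega^0\simeq\omega^0 x^*$ for each $x\in U$, \Cref{stabnmot}, and the field case carried out in Proposition \ref{weightTruncationfield}. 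The identical computation on the $\mathrm{DM}$ side (\cite[Proposition 3.7]{MR3920833}) then gives the compatibility with $\rho_\mathrm{N}$. This smooth-case Stein-factorisation analysis is where the principal difficulty lies, since the induction on $\dim X$ does not shrink $U$.
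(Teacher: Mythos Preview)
Your approach is genuinely different from the paper's: rather than invoking Vaish's glued $m$-structure formalism \cite[Theorem~5.2.2 and Section~2.5]{MR4109490}, you produce $\omega^0$ via the adjoint functor theorem and attempt to establish compactness and $\rho_\mathrm{N}$-compatibility by Noetherian induction on $\dim X$. Two steps do not go through.

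First, the mate argument for $i^*$ is wrong. Since $i^*$ is the \emph{left} member of the adjunction $i^*\dashv i_*$, the commuting square $\iota_Z\, i^*\simeq i^*\iota_X$ of left adjoints yields, upon passing to right adjoints, only $\omega^0_X\, i_*\simeq i_*\,\omega^0_Z$ --- not $i^*\omega^0_X\simeq\omega^0_Z\, i^*$. There is no formal reason for a left adjoint to commute with the right adjoint $\omega^0$. Without this commutation your conservativity check for $\omega^0_X j_!\simeq j_!\omega^0_U$ fails: after applying $i^*$ you cannot conclude $i^*\omega^0_X(j_!N')=0$. (Using $i^!$ instead does give $i^!\omega^0_X\simeq\omega^0_Z i^!$ via the mate for $i_*\dashv i^!$, but then $i^!j_!\neq 0$, so the check still fails.) The commutation $i^*\omega^0\simeq\omega^0 i^*$ is true, but it is exactly the content of the punctual gluing: in Vaish's construction $\omega^0$ is \emph{defined} as the truncation for an $m$-structure glued from the fieldwise Morel t-structures of \Cref{weightTruncationfield}, so that every $x^*$, hence every $i^*$, is $m$-exact by design and therefore commutes with the truncation.

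Second, the Stein factorisation does not compute $\omega^0$ for non-smooth $f$. Proposition~\ref{weightTruncationfield} identifies $\omega^0(f_*\Q_Y)\simeq\pi_*\Q_{\pi_0(Y)}$ only for $f$ \emph{smooth} proper, because the argument uses that $\HH^i(f_*\Q_Y)$ is pure of weight $i$. For a nodal cubic $C$ over $k$ one has $\HH^1(C,\Q)\simeq\Q$ of weight $0$, hence $\omega^0(f_*\Q_C)$ has nontrivial $\HH^1$ and strictly contains $\Q_k=\pi_*\Q_{\pi_0(C)}$. The generators $f_*\Q_Y$ of $\mathrm{DN}^\mathrm{coh}_c(U)$ have $f$ merely proper; even if one arranges $Y$ smooth over $k$, the fibres of $f\colon Y\to U$ can be singular, so your pointwise invocation of Proposition~\ref{weightTruncationfield} is illegitimate. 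The paper's route avoids this entirely: once $\omega^0$ is the truncation of a glued $m$-structure, it automatically preserves the constructible category, and $\rho_\mathrm{N}$-compatibility reduces to pointwise $m$-exactness, which is \Cref{compatiOmega0}.
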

\begin{proof}
	The existence of $\omega^0$ for Nori motives is proven exactly in the same way as for étale motives in \cite[Theorem 5.2.2]{MR4109490}. This uses the existence 
	of a truncation functor over a field, a fact ensured by \Cref{weightTruncationfield}.  By definition,
	$\omega^0$ is a truncation functor for a $m$-structure (\cite[section 2.5]{MR4109490}), obtained by punctual gluing. As $\rho_\mathrm{N}^K$ is $m$-exact for all fields $K$ by \Cref{compatiOmega0},
	we have that $\rho_\mathrm{N}^X$ is also $m$-exact, hence commutes with truncations.
\end{proof}
We can now prove the main result of the section.
\begin{thm}
  \label{thmA}
  Let $k$ be a field of characteristic zero and let $X$ be a finite type $k$-scheme. The functor 
  \[\mathrm{DM}^0(X)\to\mathrm{DN}^0(X)\] is an equivalence.
\end{thm}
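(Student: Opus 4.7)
Since both $\mathrm{DM}^0(X)$ and $\mathrm{DN}^0(X)$ are compactly generated and $\rho_\mathrm{N}$ preserves colimits and compact objects, it suffices to prove the induced functor $\mathrm{DM}^0_c(X)\to \mathrm{DN}^0_c(X)$ is an equivalence, which I would do by Noetherian induction on $\dim X$. The base case $\dim X=0$ writes $X$ as a disjoint union of spectra of finite extensions of $k$ and follows from the known equivalence over a field. For the induction step, given $M,N\in\mathrm{DM}^0_c(X)$, I apply \Cref{lemmaSm} to $M$ and $N$ (together with generic smoothness in characteristic zero) to find a dense smooth open $j\colon U\hookrightarrow X$ with $j^*M,j^*N\in\mathrm{DM}^{0,\mathrm{sm}}_c(U)$; let $i\colon Z\hookrightarrow X$ denote the reduced closed complement, of strictly smaller dimension.

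For fully faithfulness, I apply $\Hom_{\mathrm{DM}(X)}(M,-)$ to the localisation cofiber sequence $j_!j^*N\to N\to i_*i^*N$, and further analyse $\Hom(M,j_!j^*N)$ by applying $\Hom(-,j_!j^*N)$ to the localisation sequence for $M$; comparing with the analogous construction in $\mathrm{DN}$ via $\rho_\mathrm{N}$ (which commutes with $j_!,j^*,i_*,i^*$) reduces the equivalence of $\Hom_{\mathrm{DM}(X)}(M,N)\to \Hom_{\mathrm{DN}(X)}(\rho_\mathrm{N} M,\rho_\mathrm{N} N)$ to three auxiliary pairings: $\Hom(j^*M,j^*N)$ on $U$ (settled by the smooth case \Cref{0motsmooo}), $\Hom(i^*M,i^*N)$ on $Z$ (settled by the induction hypothesis, using that $i^*$ preserves Artin motives, \Cref{stabop}), and the off-diagonal term $\Hom(i_*i^*M,j_!j^*N)\simeq \Hom_{\mathrm{DM}(Z)}(i^*M,i^!j_!j^*N)$. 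The off-diagonal term is where the Artin truncation enters: although $i^!j_!j^*N$ is only cohomological on $Z$, the adjunction between the inclusion and $\omega^0$ rewrites the Hom as $\Hom_{\mathrm{DM}^0_c(Z)}(i^*M,\omega^0 i^!j_!j^*N)$, a pairing in the Artin category of $Z$ to which the induction hypothesis applies; the compatibility $\rho_\mathrm{N}\omega^0\simeq \omega^0\rho_\mathrm{N}$ from \Cref{weightTruncation} ensures this identification is compatible with $\rho_\mathrm{N}$. Two successive applications of the five lemma then give full faithfulness on $X$.

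Essential surjectivity falls out by gluing: given $N\in\mathrm{DN}^0_c(X)$, lift $j^*N$ to $M_U\in\mathrm{DM}^{0,\mathrm{sm}}_c(U)$ via \Cref{0motsmooo} and $i^*N$ to $M_Z\in\mathrm{DM}^0_c(Z)$ via the induction hypothesis. The objects $j_!M_U$ and $i_*M_Z$ lie in $\mathrm{DM}^0_c(X)$ by \Cref{stabop}, so the full faithfulness just established on $X$ lifts the recollement connecting map $i_*i^*N\to j_!j^*N[1]$ of $\mathrm{DN}(X)$ uniquely to a map $i_*M_Z\to j_!M_U[1]$ in $\mathrm{DM}(X)$; its fiber is a compact Artin motive on $X$ mapping under $\rho_\mathrm{N}$ to $N$.

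The main obstacle is the off-diagonal term in the fully faithfulness step, and handling it is precisely what makes Nair's truncation $\omega^0$ indispensable to the argument: without the compatibility $\rho_\mathrm{N}\omega^0\simeq \omega^0\rho_\mathrm{N}$ of \Cref{weightTruncation}, the comparison at $Z$ would involve a Hom into an object outside the Artin subcategory and the induction hypothesis could not be invoked directly.
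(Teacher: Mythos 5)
Your proposal is correct and takes essentially the same route as the paper: Noetherian induction, \Cref{lemmaSm} plus generic smoothness to reduce to the smooth case \Cref{0motsmooo} on a dense open, the localisation triangle, and the compatibility $\rho_\mathrm{N}\circ\omega^0\simeq\omega^0\circ\rho_\mathrm{N}$ of \Cref{weightTruncation} to invoke the inductive hypothesis on the closed complement. The paper is just slightly more economical: it applies the localisation triangle only in the contravariant variable of $\mathrm{map}(-,M)$, so the closed-stratum term is directly $\mathrm{map}(i^*N,\omega^0 i^!M)$ and no off-diagonal term or second dévissage is needed, and essential surjectivity is immediate because the generators $f_*\Q_Y$ are visibly in the image, rather than via your (also valid) gluing argument.
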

\begin{proof}
  As the functor clearly reaches the generators and preserves compact objects, it suffices to prove that it is an fully faithful on $\mathrm{DM}^0_c(X)$. We do a Noetherian induction on $X$. Let $M,N\in\mathrm{DM}^0_c(X)$.  By \Cref{lemmaSm} and generic smoothness, there exists a nonempty smooth open subset $j\colon U\to X$ such that both $j^*M$ and $j^*N$ are smooth. Denote by $i\colon Z\to X$ the inclusion of the reduced closed complement. By localisation, we have the following commutative diagram
  \[\begin{tikzcd}
    {\mathrm{map}_{\mathrm{DM}(X)}(i_*i^*N,M)} & {\mathrm{map}_{\mathrm{DM}(X)}(N,M)} & {\mathrm{map}_{\mathrm{DM}(X)}(j_!j^*N,M)} \\
    {\mathrm{map}_{\mathrm{DN}(X)}(i_*i^*N,M)} & {\mathrm{map}_{\mathrm{DN}(X)}(N,M)} & {\mathrm{map}_{\mathrm{DN}(X)}(j_!j^*N,M)}
    \arrow[from=1-1, to=1-2]
    \arrow[from=1-1, to=2-1]
    \arrow[from=1-2, to=1-3]
    \arrow[from=1-2, to=2-2]
    \arrow[from=1-3, to=2-3]
    \arrow[from=2-1, to=2-2]
    \arrow[from=2-2, to=2-3]
  \end{tikzcd}\] where the lines are cofiber sequences of spectra. By the $(j_!,j^*)$ adjunction and the smooth case \Cref{0motsmooo}, the right vertical map is an equivalence. Using the $(i_*,i^!)$ adjunction, the fact that $i$ being quasi-finite the functor $i^!$ preserves cohomological motives by \Cref{stabop} and the fact that the truncation functor $\omega^0$ commutes with $\rho_\mathrm{N}$ by \Cref{weightTruncation}, we see that the left vertical map is equivalent to the map 
  \[\mathrm{map}_{\mathrm{DM}(Z)}(i^*N,\omega^0i^!M)\to \mathrm{map}_{\mathrm{DN}(Z)}(i^*N,\omega^0i^!M)\] which is an equivalence by Noetherian induction, finishing the proof.
\end{proof}
\subsection{Smooth and rigid Artin motives.}
In this section we prove that smooth and rigid Artin coincide on a normal scheme, and we identify the heart of the t-structure. We fix a field $k$ of characteristic zero.

\begin{lem}
	\label{omega0support}
Let $i:Z\to S$ be a closed immersion of finite type $k$-schemes with $S$ regular, such that the codimension of $Z$ in $S$ is $\geqslant 1$.
 Then for any $M\in\mathrm{DN}^{0,\mathrm{rig}}_c(S)$ we have $\omega^0(i^!M)=0$.
\end{lem}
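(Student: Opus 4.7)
The plan is a Noetherian induction on $\dim Z$, the base case $Z=\varnothing$ being trivial. The two ingredients I will use are the projection formula $\iota^!M\simeq \iota^!\Q_S\otimes \iota^*M$, which holds because $M$ is dualisable, and absolute purity on the regular locus of $Z$.

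Let $j\colon V\hookrightarrow Z$ be the inclusion of the regular locus of $Z$, dense open by generic regularity in characteristic zero, and let $i'\colon Z'\hookrightarrow Z$ be the reduced complement, so that $\dim Z'<\dim Z$. Set $\iota=i\circ j\colon V\to S$ and $i'''=i\circ i'\colon Z'\to S$; both are (locally) closed immersions into the regular $S$, of codimension $\geqslant 1$. Applying $\omega^0$ to the localisation cofiber sequence
\[i'_*(i''')^!M\to i^!M\to j_*\iota^!M\]
yields another cofiber sequence. Since $(i')^*$, $i'_*$ and $j^*$ all preserve Artin motives by \Cref{stabop}, a short Yoneda calculation gives the identities $\omega^0\circ i'_*\simeq i'_*\circ\omega^0$ and $\omega^0\circ j_*\simeq \omega^0\circ j_*\circ\omega^0$. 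The closed part therefore becomes $i'_*\omega^0(i''')^!M$, which vanishes by the inductive hypothesis applied to $i'''$, and the problem reduces to showing $\omega^0\iota^!M=0$ over the regular scheme $V$.

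Because $V$ and $S$ are both regular and $\iota$ is a locally closed immersion of codimension $c\geqslant 1$ (I reduce to $V$ connected by splitting into components), absolute purity in $\mathrm{DM}$ (Cisinski--Déglise), transferred to $\mathrm{DN}$ because $\rho_\mathrm{N}$ commutes with the six operations, gives $\iota^!\Q_S\simeq \Q_V(-c)[-2c]$. Since $M$ is dualisable,
\[\iota^!M\simeq \iota^!\Q_S\otimes \iota^*M\simeq \Q_V(-c)[-2c]\otimes\iota^*M.\]
The factor $\iota^*M$ is a rigid Artin motive on $V$, pure of weight $0$, so for every point $v\in V$ the pullback $v^*\iota^!M\simeq \Q_{\kappa(v)}(-c)[-2c]\otimes(\iota v)^*M$ has, cohomology-wise, weight $2c\geqslant 2$; by \Cref{weightTruncationfield} this means $\omega^0(v^*\iota^!M)=0$. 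As the $m$-structure underlying $\omega^0$ is defined by punctual gluing (see the proof of \Cref{weightTruncation}), the pullback $v^*$ is $m$-exact and commutes with $\omega^0$, hence $v^*\omega^0\iota^!M=0$ for every $v\in V$. By conservativity of point pullbacks on cohomological Nori motives (\Cref{stabnmot}), we conclude $\omega^0\iota^!M=0$, finishing the induction.

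The main technical obstacle is the interaction of $\omega^0$ with the various base-change functors — the commutations with $i'_*$ and $j_*$ that make the dévissage go through, and above all the commutation with point pullbacks $v^*$ used at the end. All three rest on the pointwise ``punctual gluing'' construction of the $m$-structure; once they are in place, absolute purity does the numerical work of pushing the weights into the positive range where $\omega^0$ vanishes.
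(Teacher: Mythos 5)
Your dévissage skeleton (pass to the regular locus of $Z$, use absolute purity there, commute $\omega^0$ past the closed pushforward, and run a Noetherian induction) is exactly the paper's, and those steps are fine. The gap is in the final vanishing argument on the regular stratum. You assert that $v^*$ ``is $m$-exact and commutes with $\omega^0$'' because the $m$-structure is defined by punctual gluing, and you then conclude from $\omega^0(v^*\iota^!M)=0$ for all points $v$ that $\omega^0(\iota^!M)=0$. Punctual gluing does not give this: in that formalism the aisle is controlled by stalks $v^*$ but the co-aisle by costalks $v^!$, and truncation functors of glued structures do not in general commute with pointwise pullbacks (compare perverse truncations and $i^*$). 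So the weight estimate you establish, namely that all $v^*\iota^!M$ have weights $>0$, is not the condition that forces $\mathrm{Hom}(A,\iota^!M)=0$ for Artin motives $A$, and the commutation $v^*\omega^0\simeq\omega^0 v^*$ is precisely the hard content you would need to prove, not a formal consequence. (Also, \Cref{stabnmot} is a pointwise criterion for membership in the cohomological or Artin subcategories, not a conservativity statement for the family $\{v^*\}$, though conservativity itself is true and could be sourced elsewhere.)

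The step can be repaired in two ways. Either check the costalk condition instead: by purity on the regular $V$ one has $v^!\iota^!M\simeq v^*\iota^*M(-c-d_v)[-2(c+d_v)]$ with $d_v=\dim\mathcal{O}_{V,v}$, so all costalks have weights $\geqslant 2>0$, and then invoke the orthogonality built into the punctually glued structure (Artin motives lie in the stalkwise weight-$\leqslant 0$ aisle, so they admit no nonzero maps to costalkwise weight-$>0$ objects), which gives $\omega^0(\iota^!M)=0$ directly. Or do what the paper does: use \Cref{thmA} to write $M=\rho_\mathrm{N}(M')$ with $M'\in\mathrm{DM}^0_c(S)$, use the compatibility of $\rho_\mathrm{N}$ with $\omega^0$ from \Cref{weightTruncation}, and quote Pépin-Lehalleur's vanishing $\omega^0$ of a negatively twisted cohomological motive in $\mathrm{DM}$; this outsources exactly the statement your pointwise argument was trying to reprove inside $\mathrm{DN}$.
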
	
\begin{proof}
	We argue as in \cite[Lemma 2.4]{MR4033829}: If $i$ was a regular immersion of codimension $c>0$, then by absolute purity as in \cite[Proposition 1.7]{MR3920833} we would 
	have an isomorphism $i^!M\simeq i^*M(-c)[-2c]$. Now, by \Cref{thmA}, $M = \rho_\mathrm{N}(M')$ for some $M'\in \mathrm{DM}^{0}_c(S)$. 
	By \cite[Corollary 3.9 (iii)]{MR3920833}, $\omega^0(i^*M'(-c)[2c])=0$ (note that $i^!M'\in\mathrm{DM}^\mathrm{coh}$ by \cite[Proposition 1.12 (iv)]{MR3920833}) hence 
	$\omega^0(i^!M)=\rho_\mathrm{N}(0)=0$.
	Now if $i$ is not a regular immersion, we can find a dense open subset $u:U\to Z$, such that $U$ is regular with reduced complement $i_1:Z_1\to Z$. As $U$ and $S$ are regular,
	 \cite[\href{https://stacks.math.columbia.edu/tag/0E9J}{Tag 0E9J}]{stacks-project} ensures that $i\circ u$ is a regular 
	immersion of codimension $c\geqslant 1$. We have a cofiber sequence 
	\begin{eqnarray}
		(i_1)_*(i\circ i_1)^!M \to i^!M\to u_*(i\circ u)^!M
	\end{eqnarray}
	which gives $\omega^0((i_1)_*(i\circ i_1)^!M)\simeq \omega^0(i^!M)$, because $(i\circ u)^!M\simeq (i\circ u)^*M[-2c](-c)$ thus $\omega^0(u_*(i\circ u)^!M) = 0$ by the same argument as above using \cite[Corollary 3.9 (iii)]{MR3920833}. 
	As $i_1$ is quasi-finite and $(i_1)_*\simeq (i_1)_!$, the functors $(i_1)_*$ and $\omega^0$ commute, giving finally that $(i_1)_*\omega^0((i\circ i_1)^!M)\simeq \omega^0(i^!M)$.
	A Noetherian induction then implies that $\omega^0(i^!M)=0$.
\end{proof}
\begin{lem}
  \label{GoodRed}
  Let $S$ be a normal and connected finite type $k$-scheme with generic point $\eta$.
  Let $M\in\mathrm{Rep}_\Q^A(\pi_1^{\acute{e}t}(\eta))$ be an Artin representation such that there exists $N\in\mathrm{Rep}_{\Q_\ell}^A(\pi_1^{\acute{e}t}(S))$ verifying $N_\eta \simeq M\otimes_\Q\Q_\ell$. Then there exists an object $M'\in \mathrm{Rep}_\Q^A(\pi_1^{\acute{e}t}(S))$ such that $M'_\eta \simeq M$.
\end{lem}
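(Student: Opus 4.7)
The plan is to exploit the elementary fact that for $S$ normal and connected with generic point $\eta$, the natural map $\pi \colon \pi_1^{\et}(\eta) \to \pi_1^{\et}(S)$ (with compatibly chosen base points) is surjective. Indeed, every connected finite étale cover $T \to S$ is itself normal and connected, hence integral, so its generic fibre $T \times_S \eta$ is also integral and in particular connected; by Galois theory this yields the claimed surjection. Consequently, an Artin $\Q$-representation of $\pi_1^{\et}(\eta)$ extends to an Artin $\Q$-representation of $\pi_1^{\et}(S)$ if and only if the kernel $K := \ker(\pi)$ acts trivially on it.

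It therefore suffices to show that $K$ acts trivially on $M$. Pick any $\sigma \in K$. Since $N$ is a representation of $\pi_1^{\et}(S)$, $\sigma$ acts trivially on $N_\eta$, and by the given isomorphism $N_\eta \simeq M \otimes_\Q \Q_\ell$ it therefore acts trivially on $M \otimes_\Q \Q_\ell$. But the action of $\sigma$ on $M \otimes_\Q \Q_\ell$ is the $\Q_\ell$-linear extension of its $\Q$-linear action on $M$, so the endomorphism $\sigma - \id \in \End_\Q(M)$ vanishes after extending scalars to $\Q_\ell$. By faithful flatness of $\Q_\ell/\Q$ (or, picking a $\Q$-basis of $M$, by reading off matrix entries), this forces $\sigma = \id$ on $M$. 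Hence the $\pi_1^{\et}(\eta)$-action on $M$ descends to a well-defined $\pi_1^{\et}(S)$-action; its image in $\mathrm{GL}(M)$ is a quotient of the finite image of the original representation $\pi_1^{\et}(\eta) \to \mathrm{GL}(M)$ and hence finite, so we obtain the desired object $M' \in \mathrm{Rep}_\Q^A(\pi_1^{\et}(S))$ with $M'_\eta \simeq M$ tautologically.

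The one step meriting any caution is the surjectivity of $\pi_1^{\et}(\eta) \to \pi_1^{\et}(S)$, which genuinely uses normality of $S$ and would fail, for instance, on a nodal curve. Everything else is a formal consequence of faithful flatness, so this is where the main hypothesis of the lemma is used.
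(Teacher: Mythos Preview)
Your proof is correct and follows essentially the same approach as the paper: both arguments hinge on the surjectivity of $\pi_1^{\et}(\eta)\to\pi_1^{\et}(S)$ for $S$ normal (the paper cites \cite[Exposé V, Proposition 8.2]{SGA1}) together with the injectivity of $\mathrm{GL}_n(\Q)\hookrightarrow\mathrm{GL}_n(\Q_\ell)$. The paper packages this as a commutative square and reads off that the image of $\pi_1^{\et}(S)\to\mathrm{GL}_n(\Q_\ell)$ already lands in $\mathrm{GL}_n(\Q)$, whereas you phrase it dually by showing the kernel of $\pi_1^{\et}(\eta)\to\pi_1^{\et}(S)$ acts trivially on $M$; these are two formulations of the same elementary diagram chase.
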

\begin{proof}
  The object $M$ corresponds to a morphism $\pi_1^{\acute{e}t}(\eta)\to\mathrm{GL}_n(\Q)$. The hypothesis may be translated as the existence of a commutative square \[\begin{tikzcd}
    \pi_1^{\acute{e}t}(\eta) \arrow[r] \arrow[d] 
        & \mathrm{GL}_n(\Q) \arrow[d] \\
        \pi_1^{\acute{e}t}(S) \arrow[r]
        & \mathrm{GL}_n(\Q_\ell)
  \end{tikzcd}.\]
Now the right vertical map is injective, and the left vertical map is surjective (because $S$ is normal, by \cite[Exposé V. Proposition 8.2]{SGA1}). Thus in fact the image of the lower horizontal map lands in $\mathrm{GL}_n(\Q)$, providing the seeked $M'$.
\end{proof}
Recall that by Cavicchi, Déglise and Nagel \cite[Proposition 2.3.4]{MR4631432} the canonical functor 
\[\Dd^b(\mathrm{Rep}^A_\Q(\pi_1^{\'et}(S))\to\mathrm{DM}^{0,\mathrm{sm}}_c(S)\] is an equivalence when $S$ is a regular connected scheme.  In particular the stable $\infty$-category $\mathrm{DM}^{0,\mathrm{sm}}_c(S)$ have a t-structure.
\begin{prop}[Haas]
  \label{pi1etale}
	Let $S$ be a connected smooth $k$-scheme of finite type. Let $M\in\mathrm{DM}^0_c(S)^\heartsuit$. The following are equivalent :
	\begin{enumerate}
		\item $M\in\mathrm{DM}^{0,\mathrm{sm}}_c(S)^\heartsuit \simeq \mathrm{Rep}^A_\Q(\pi^1_{\acute{e}t}(S))$.
		\item $M$ is dualisable as an object of $\mathrm{DM}(S)$.
		\item $M$ is dualisable as an object of $\mathrm{DN}(S)$.
		\item The $\ell$-adic realisation of $M$ is a lisse sheaf.
	\end{enumerate}
	That is, the natural functors $$\mathrm{Rep}^A_\Q(\pi^1_{\acute{e}t}(S))\to\mathrm{DM}^{\mathrm{rig}}_c(S)\cap \mathrm{DM}_c^0(S)^\heartsuit \to\mathrm{DN}_c^{0,\mathrm{rig}}(S)^\heartsuit\to \mathrm{LS}(S_{\acute{e}t},\Q_\ell)\times_{\mathrm{Shv}_{\acute{e}t}(S,\Q_\ell)} \mathrm{DN}^0_c(S)^\heartsuit$$ 
	are equivalences, where $\mathrm{LS}(S_{\acute{e}t},\Q_\ell)$ consists of $\Q_\ell$-adic étale local systems on $S$.
\end{prop}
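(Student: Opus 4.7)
The plan is to establish the cycle of implications $(1) \Rightarrow (2) \Rightarrow (3) \Rightarrow (4) \Rightarrow (1)$, which simultaneously gives the three equivalences of categories displayed after the statement. The first two are formal: $(1) \Rightarrow (2)$ follows from the inclusion $\mathrm{DM}^{0,\mathrm{sm}}_c(S) \subseteq \mathrm{DM}^{0,\mathrm{rig}}(S)$ observed in the introduction, and $(2) \Rightarrow (3)$ is preservation of dualisable objects under the symmetric monoidal functor $\rho_\mathrm{N}$. For $(3) \Rightarrow (4)$, the $\ell$-adic realization is symmetric monoidal and $t$-exact on Nori Artin motives, so a dualisable object of the heart is sent to a dualisable object of $\Dd^b_c(S_{\acute{e}t},\Q_\ell)$ concentrated in degree zero, which is the same as a lisse $\Q_\ell$-sheaf.

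The substantive direction is $(4) \Rightarrow (1)$, for which I adapt Haas's argument for $1$-motives. By \Cref{lemmaSm} there is a dense open immersion $j \colon U \hookrightarrow S$ such that $M_{\mid U}$ is smooth Artin, and under the Cavicchi--Déglise--Nagel equivalence \cite{MR4631432} the generic fibre $M_\eta$ corresponds to an Artin $\Q$-representation of $\pi_1^{\acute{e}t}(\eta)$. The lisse hypothesis on the realization provides a continuous $\Q_\ell$-representation of $\pi_1^{\acute{e}t}(S)$ restricting on $\eta$ to $M_\eta \otimes_\Q \Q_\ell$, so \Cref{GoodRed} yields an Artin $\Q$-representation of $\pi_1^{\acute{e}t}(S)$ with generic fibre $M_\eta$; let $M' \in \mathrm{DM}^{0,\mathrm{sm}}_c(S)^\heartsuit$ denote the corresponding motive. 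Since $S$, hence $U$, is normal and connected, the surjection $\pi_1^{\acute{e}t}(\eta) \twoheadrightarrow \pi_1^{\acute{e}t}(U)$ from \cite[Exposé V, Proposition 8.2]{SGA1} makes the restriction functor on Artin representations of $\pi_1^{\acute{e}t}(U)$ fully faithful, so already $M_{\mid U} \simeq M'_{\mid U}$ in $\mathrm{DM}^{0,\mathrm{sm}}_c(U)^\heartsuit$.

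It remains to globalize this identification. Let $i \colon Z \hookrightarrow S$ be the reduced complementary closed immersion, which has codimension at least one. Applying the exact truncation functor $\omega^0$ of \Cref{weightTruncation} to the localization cofibre sequence $i_* i^! \to \mathrm{Id} \to j_* j^*$ evaluated at $M$ and at $M'$, and using that $i_* = i_!$ preserves Artin and cohomological motives (\Cref{stabop}) so that $\omega^0$ commutes with $i_*$, it suffices to prove $\omega^0(i^! M) = \omega^0(i^! M') = 0$. For $M'$ this is \Cref{omega0support} since $M'$ is smooth, hence rigid. For $M$ itself we do not yet know rigidity, but the proof of \Cref{omega0support} uses only absolute purity along regular immersions into the smooth base $S$ together with the fact that $\omega^0$ annihilates the positive Tate twists of any Artin motive by Pépin-Lehalleur's weight argument; neither input requires rigidity, so that proof applies verbatim to $M$. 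Thus $M \simeq \omega^0(j_* j^* M)$ and $M' \simeq \omega^0(j_* j^* M')$, and since $j^* M \simeq j^* M'$ the two right-hand sides coincide, giving $M \simeq M'$. The main obstacle is precisely this globalization step, which hinges on extending \Cref{omega0support} past its stated rigidity hypothesis; everything else is an assembly of inputs already in place.
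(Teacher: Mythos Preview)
Your cycle $(1)\Rightarrow(2)\Rightarrow(3)\Rightarrow(4)$ is fine and matches the paper. The gap is in your globalization step for $(4)\Rightarrow(1)$: the assertion that the proof of \Cref{omega0support} ``applies verbatim to $M$'' without the rigidity hypothesis is false. The absolute purity isomorphism $i^!M\simeq i^*M(-c)[-2c]$ used there is obtained from $i^!\Q_S\simeq\Q_Z(-c)[-2c]$ via the projection-type identification $i^!M\simeq i^*M\otimes i^!\Q_S$, and the latter is exactly what dualisability of $M$ buys you; it fails for a general object of $\mathrm{DM}^0_c(S)^\heartsuit$. Concretely, for $S=\A^1_k$, $i\colon\{0\}\hookrightarrow S$, and $M=\Q_S\oplus i_*\Q_{\{0\}}\in\mathrm{DM}^0_c(S)^\heartsuit$, one computes $i^!M\simeq\Q(-1)[-2]\oplus\Q$ and hence $\omega^0(i^!M)\simeq\Q\neq 0$. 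Of course this $M$ does not satisfy (4), but that only shows your argument must \emph{use} the lisse hypothesis at this point, and invoking the proof of \Cref{omega0support} verbatim does not.

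The paper circumvents this by never needing $\omega^0(i^!M)=0$ for the given $M$. Instead it applies \Cref{omega0support} only to objects already known to be rigid, deducing that the restriction $\eta^*\colon\mathrm{DN}^{0,\mathrm{rig}}_c(S)^\heartsuit\to\mathrm{DN}^{0}_c(\eta)^\heartsuit$ is fully faithful; combined with \Cref{GoodRed} this yields $\mathrm{Rep}^A_\Q(\pi_1^{\acute{e}t}(S))\simeq\mathrm{DN}^{0,\mathrm{rig}}_c(S)^\heartsuit$. Only then does one return to $M$: its image in Nori motives is rigid (since the lisse condition on the realisation forces this on the Nori side), hence lies in $\mathrm{Rep}^A_\Q(\pi_1^{\acute{e}t}(S))$, hence is smooth Artin. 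So the paper's route trades your direct comparison $M\simeq M'$ for an abstract identification of the rigid subcategory, precisely to keep \Cref{omega0support} inside its stated hypotheses.
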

\begin{proof}
	Of course, $1.\Rightarrow 2. \Rightarrow 3. \Rightarrow 4.$ because of Poincaré duality for finite étale morphisms and because a symmetric monoidal functor preserves dualisable objects. 

	We prove that $4.\Rightarrow 1.$ by adapting Johann Haas' \cite[Lemma 6.12]{HaasThesis} to the simpler case of $0$-motives. Let $\eta$ be the generic point of $S$.
	We first claim that the functor $\mathrm{DN}^{0,\mathrm{rig}}_c(S)^\heartsuit \to \mathrm{DN}^{0,\mathrm{rig}}_c(\eta)^\heartsuit$ is fully faithful. 
	Indeed for any dense open subset $j:U\to S$, the combination of \Cref{omega0support} and of the localisation sequence 
	gives that for $M\in\mathrm{DM}^{0,\mathrm{rig}}(S)$, $M\to \omega^0(j_*j^*M)$ is an equivalence as $\omega^0(i^!M)= 0$. Therefore the unit of the adjunction $(j^*,\tau^{\leqslant 0}\omega^0j_*)$ on $\mathrm{DN}^0_c(S)^\heartsuit$ is an equivalence on $\mathrm{DN}^{0,\mathrm{rig}}(S)^\heartsuit$, 
	this means that $j^*$ is fully faithful on $\mathrm{DN}^{0,\mathrm{rig}}(S)^\heartsuit$. Passing to the colimit, $\eta^*$ is fully faithful.

	Therefore there is a commutative diagram 
	\begin{equation}
		\begin{tikzcd}
			\mathrm{Rep}^A_\Q(\pi^1_{\acute{e}t}(S)) \ar[r] \ar[d] & \mathrm{DN}^{0,\mathrm{rig}}_c(S)^\heartsuit \ar[d] \\
			\mathrm{Rep}^A_\Q(\pi^1_{\acute{e}t}(\eta)) \ar[r] & \mathrm{DN}^{0}_c(\eta)^\heartsuit
		\end{tikzcd}
	\end{equation}
	\label{goodRed}
	in which the lower horizontal functor is an equivalence of categories, the top functor is fully faithful by \Cref{thmA} and the right vertical functor is fully faithful by what we've just seen. 
	The left vertical functor is then also fully faithful.
  %  because $S$ is normal, by  \cite[Exposé V. Proposition 8.2]{SGA1}.
    But then by \Cref{GoodRed} if $M\in\mathrm{DN}_c^{0,\mathrm{rig}}(S)^\heartsuit$, we can lift to to $S$ its pre-image in $\mathrm{Rep}^A_\Q(\pi_{\acute{e}t}^1(\eta))$ and the top functor is also essentially surjective.

	Therefore if $M\in\mathrm{DM}_c^0(S)^\heartsuit$ is such that its $\ell$-adic realisation is lisse (\emph{i.e.} dualisable), then its image in the category of perverse Nori motives is in $\mathrm{DN}^{0,\mathrm{rig}}_c(S)^\heartsuit$, 
	which is equivalent to $\mathrm{Rep}^A_\Q(\pi^1_{\acute{e}t}(S))$: our object $M$ is in $\mathrm{DM}^{0,\mathrm{sm}}_c(S)^\heartsuit$, and the proof is finished.
\end{proof}

\begin{cor}
	Let $S$ be a regular $k$-scheme of finite type.
	Let $M\in\mathrm{DM}^0_c(S)=\mathrm{DN}^0_c(S)$. Then the following are equivalent :
	\begin{enumerate}
		\item $M\in\mathrm{DM}^{0,\mathrm{sm}}(S)=\Dd^b(\mathrm{Rep}^A_\Q(\pi^1_{\acute{e}t}(S)))$.
		\item $M\in\mathrm{DM}(S)$ is dualisable.
		\item The $\ell$-adic realisation of $M$ is dualisable.
	\end{enumerate}
	Moreover, in that case, the dual of $M$ is also a $0$-motive. 
	That is, the inclusion $\mathrm{DM}_c^{0,\mathrm{sm}}(S)\to\mathrm{DM}_c^{0,\mathrm{rig}}(S)$ is an equivalence of 
	categories, with $\mathrm{DM}_c^{0,\mathrm{rig}}(S)$ the full subcategory of $M\in\mathrm{DM}_c^0(S)$ that are dualisable in $\mathrm{DM}(S)$.
\end{cor}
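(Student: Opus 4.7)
The plan is to bootstrap from the heart-level equivalence established in \Cref{pi1etale} to the full derived level, using the motivic $t$-structure on $\mathrm{DM}^0_c(S)$ constructed by Pépin-Lehalleur and recalled in the introduction. The implications $1\Rightarrow 2$ and $2\Rightarrow 3$ are formal: the generators $f_*\Q_Y$ of $\mathrm{DM}^{0,\mathrm{sm}}_c(S)$ are self-dual by the projection formula (for $f$ finite étale), so every object of the thick subcategory $\mathrm{DM}^{0,\mathrm{sm}}_c(S)\simeq \Dd^b(\mathrm{Rep}^A_\Q(\pi_1^\et(S)))$ is dualisable in $\mathrm{DM}(S)$; and the $\ell$-adic realisation being symmetric monoidal preserves dualisable objects.

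The heart of the matter is $3\Rightarrow 1$. Given $M\in\mathrm{DM}^0_c(S)$ whose $\ell$-adic realisation $M_\ell$ is dualisable, I would first observe that on the regular base $S$, a dualisable object of $\Db_c(S_\et,\Q_\ell)$ has lisse cohomology sheaves: indeed over such a scheme lisse sheaves are closed under extensions inside constructible sheaves, and the bounded derived category of lisse sheaves exhausts the rigid subcategory of $\Db_c(S_\et,\Q_\ell)$. Combined with $t$-exactness of the $\ell$-adic realisation for the motivic $t$-structure, this forces each cohomology object $\HH^i(M)\in\mathrm{DM}^0_c(S)^\heartsuit$ to have lisse (hence dualisable) $\ell$-adic realisation. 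Applying \Cref{pi1etale} pointwise then shows that each $\HH^i(M)$ lies in $\mathrm{DM}^{0,\mathrm{sm}}_c(S)^\heartsuit$. Since $M$ is bounded in the motivic $t$-structure and $\mathrm{DM}^{0,\mathrm{sm}}_c(S)\subset\mathrm{DM}^0_c(S)$ is a stable thick subcategory (identified by \cite{MR4631432} with $\Dd^b(\mathrm{Rep}^A_\Q(\pi_1^\et(S)))$), a Postnikov devissage on the amplitude of $M$ yields $M\in\mathrm{DM}^{0,\mathrm{sm}}_c(S)$.

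For the \emph{moreover} clause, $\mathrm{DM}^{0,\mathrm{sm}}_c(S)$ is generated as a thick subcategory of $\mathrm{DM}(S)$ by the self-dual objects $f_*\Q_Y$ for $f$ finite étale, hence forms a rigid tensor subcategory of $\mathrm{DM}(S)$; the dual of any smooth Artin motive therefore remains a smooth Artin motive, a fortiori a $0$-motive. The main obstacle lies in the $\ell$-adic identification of dualisable objects of $\Db_c(S_\et,\Q_\ell)$ over a regular base with complexes of lisse sheaves, without which the cohomology-wise reduction to \Cref{pi1etale} would fail. Granted this classical fact together with the motivic $t$-structure and the Cavicchi--Déglise--Nagel description of $\mathrm{DM}^{0,\mathrm{sm}}_c(S)$, the corollary reduces cleanly to the heart-level analogue.
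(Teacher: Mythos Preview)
Your proof is correct and follows essentially the same strategy as the paper: the easy implications are dispatched formally, and for $3\Rightarrow 1$ one reduces to the heart via the $t$-structure, applies \Cref{pi1etale}, and concludes by Postnikov d\'evissage using that $\mathrm{DM}^{0,\mathrm{sm}}_c(S)$ is thick. The only cosmetic difference is where the step ``dualisable $\Rightarrow$ dualisable cohomology'' is carried out: you invoke the classical $\ell$-adic fact that over a regular base dualisable complexes have lisse cohomology, whereas the paper first reflects dualisability back to $\mathrm{DN}_c(S)$ (via conservativity of the $\ell$-adic realisation on Nori motives) and then uses that the ordinary $t$-structure on $\mathrm{DN}_c(S)$ restricts to dualisable objects.
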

\begin{proof}
	$2.\Rightarrow 3.$ is obvious, and the proof of $1.\Rightarrow 2.$ follows by dévissage, 
	using that if $1.$, each $\HH^i(M)$ are dualisable, that $M$ is an iterated extension of 
	its $\HH^i(M)$ and that the full subcategory of dualisable 
	objects in a symmetric monoidal stable $\infty$-category is thick. The proof of $3.\Rightarrow 1.$ goes as 
	follow : the hypothesis on $M$ implies that $M$, seen as an object of $\Dd^b(\Mc(S))$, is dualisable, where we have denote by $\Mc(S)$ the ordinary heart of $\mathrm{DN}_c(S)$. Then because the t-structure restricts to dualisable objects
	 each $\HH^i(M)$ is dualisable hence 
	by the previous proposition, $\HH^i(M)\in\mathrm{DM}^{0,\mathrm{sm}}_c(S)$. As the latter category is thick and $M$ is an iterated 
	extension of its $\HH^i(M)$, we obtain that $M\in\mathrm{DM}^{0,\mathrm{sm}}_c(S)$. 
\end{proof}

In the case $S$ is only normal, following Haas we also have a result in that direction, but not as strong as the preceding corollary that would be false by \cite[Remark 2.1.2]{ruimyAbelianCategoriesArtin2023}. However this 
enables us to endow $\mathrm{DM}^{0,\mathrm{sm}}_c(S)$ with a t-structure.
\begin{cor}
	Let $S$ be a normal finite type $k$-scheme. The natural functors $\mathrm{DM}^{0,\mathrm{sm}}_c(S)\to\mathrm{DM}^{0,\mathrm{rig}}(S) \to\mathrm{DN}^{0,\mathrm{rig}}_c(S)$ are equivalences of $\infty$-categories. 
	If we transport the natural ordinary t-structure from $\mathrm{DN}^{0,\mathrm{rig}}_c(S)$ to $\mathrm{DM}^{0,\mathrm{sm}}_c(S)$, the inclusion $\mathrm{DM}^{0,\mathrm{sm}}_c(S)\to \mathrm{DM}^{0}_c(S)$ is t-exact, and the heart of $\mathrm{DM}^{0,\mathrm{sm}}_c(S)$ is 
	the category $\mathrm{Rep}^A_\Q(\pi^1_{\acute{e}t}(S))$.
\end{cor}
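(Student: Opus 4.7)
The middle equivalence $\mathrm{DM}^{0,\mathrm{rig}}(S) \to \mathrm{DN}^{0,\mathrm{rig}}_c(S)$ follows from \Cref{thmA}: the symmetric monoidal equivalence $\rho_\mathrm{N}\colon\mathrm{DM}^0_c(S) \simeq \mathrm{DN}^0_c(S)$ preserves and reflects dualisability in the ambient categories (a would-be dual of an Artin motive being compact and, once identified on one side, lifting uniquely under the equivalence). The central content is then the heart identification $\mathrm{DN}^{0,\mathrm{rig}}_c(S)^\heartsuit \simeq \mathrm{Rep}^A_\Q(\pi_1^{\et}(S))$; once this is in hand the other statements follow by dévissage, since the ordinary t-structure on $\mathrm{DN}^0_c(S)$ restricts to rigid motives (by the preceding corollary), so a compact rigid Artin motive $M$ is a bounded iterated extension of its cohomology objects $\HH^n(M) \in \mathrm{Rep}^A_\Q(\pi_1^{\et}(S)) \subset \mathrm{DN}^{0,\mathrm{sm}}_c(S)$; thickness of $\mathrm{DN}^{0,\mathrm{sm}}_c(S)$ gives $\mathrm{DN}^{0,\mathrm{sm}}_c(S) = \mathrm{DN}^{0,\mathrm{rig}}_c(S)$, and transporting via \Cref{thmA} yields $\mathrm{DM}^{0,\mathrm{sm}}_c(S) = \mathrm{DM}^{0,\mathrm{rig}}(S)$. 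The t-exactness of the inclusion and the heart identification are then automatic.

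For the heart identification itself, let $j\colon U = S_\mathrm{reg} \hookrightarrow S$ be the open immersion of the regular locus, whose complement has codimension $\geqslant 2$ by normality (Serre's criterion). For $M \in \mathrm{DN}^{0,\mathrm{rig}}_c(S)^\heartsuit$, dualisability ensures that $\rho_\ell(M)$ is a lisse $\Q_\ell$-sheaf on $S$, corresponding to a continuous representation of $\pi_1^{\et}(S)$; moreover $M|_\eta \in \mathrm{Rep}^A_\Q(\pi_1^{\et}(\eta))$ has $\Q_\ell$-extension $\rho_\ell(M)|_\eta$ which factors through $\pi_1^{\et}(S)$. \Cref{GoodRed} then yields an Artin $\Q$-representation $V$ of $\pi_1^{\et}(S)$ with $V|_\eta \simeq M|_\eta$, and the associated smooth Artin motive $F(V) \in \mathrm{DN}^{0,\mathrm{sm}}_c(S)^\heartsuit \subset \mathrm{DN}^{0,\mathrm{rig}}_c(S)^\heartsuit$ satisfies $F(V)|_U \simeq M|_U$ by the preceding corollary applied to the regular scheme $U$ combined with the surjection $\pi_1^{\et}(U) \twoheadrightarrow \pi_1^{\et}(S)$ (\cite[Exposé V Proposition 8.2]{SGA1}).

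Finally, to promote this to an isomorphism $F(V) \simeq M$ on $S$, I plan to show that $j^*\colon\mathrm{DN}^{0,\mathrm{rig}}_c(S)^\heartsuit \to \mathrm{DN}^{0,\mathrm{rig}}_c(U)^\heartsuit$ is fully faithful, by running the argument of \Cref{pi1etale} (faithfulness being immediate from conservativity of $\rho_\ell$ and the surjectivity $\pi_1^{\et}(U) \twoheadrightarrow \pi_1^{\et}(S)$). The critical input, and the main obstacle, is to extend the vanishing of \Cref{omega0support} to the setting where $S$ is only normal: one needs $\omega^0(i^! M) = 0$ for $i\colon S \setminus U \to S$ and $M \in \mathrm{DN}^{0,\mathrm{rig}}_c(S)$. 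The idea is to perform a Noetherian induction along the stratification of the codimension $\geqslant 2$ singular locus $S \setminus U$, at each step intersecting the current stratum with the regular locus of the ambient scheme to invoke absolute purity, and exploiting the commutation of $\omega^0$ with proper pushforward along closed immersions (as in the proof of \Cref{omega0support}) and with $\rho_\mathrm{N}$ (\Cref{weightTruncation}) to bootstrap from the lisse $\Q_\ell$-adic realisation of $M$.
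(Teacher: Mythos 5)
Your overall skeleton agrees with the paper's: reduce to the heart by dévissage (using that the ordinary t-structure restricts to rigid objects), restrict to a dense regular open $j\colon U\to S$, use the regular case together with the surjection $\pi_1^{\et}(U)\twoheadrightarrow\pi_1^{\et}(S)$ and \Cref{GoodRed} to produce a candidate $V\in\mathrm{Rep}^A_\Q(\pi_1^{\et}(S))$ with $F(V)|_U\simeq M|_U$. The problem is the step you yourself flag as the main obstacle: promoting this to an isomorphism over $S$ by proving that $j^*$ is fully faithful on $\mathrm{DN}^{0,\mathrm{rig}}_c(S)^\heartsuit$, via an extension of \Cref{omega0support} (the vanishing $\omega^0(i^!M)=0$) to a normal base. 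This step is not proved, and the method you sketch cannot fill it: the absolute purity input of \Cref{omega0support} requires the \emph{ambient} scheme to be regular along the closed subscheme (that is what makes $i\circ u$ a regular immersion and gives $i^!M\simeq i^*M(-c)[-2c]$), whereas here the closed subscheme is precisely (contained in) the non-regular locus of $S$, so no amount of stratifying $S\setminus U$ and intersecting strata with regular loci ever puts you in a situation where purity applies. The paper's own caveat — that the stronger regular-case corollary fails over normal non-regular bases by Ruimy's Remark 2.1.2 — is a warning that such extensions of the regular-case lemmas are exactly where things break, so this gap is genuine and not a routine verification.

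For comparison, the paper never extends \Cref{omega0support} beyond the regular case. It deduces full faithfulness of $\mathrm{Rep}^A_\Q(\pi_1^{\et}(S))\to\mathrm{DN}^{0,\mathrm{rig}}_c(S)^\heartsuit$ from much weaker inputs: the equivalence $\mathrm{Rep}^A_\Q(\pi_1^{\et}(U))\simeq\mathrm{DN}^{0,\mathrm{rig}}_c(U)^\heartsuit$ over the regular open $U$, the full faithfulness of $\mathrm{Rep}^A_\Q(\pi_1^{\et}(S))\to\mathrm{Rep}^A_\Q(\pi_1^{\et}(U))$ (normality plus dominance of $j$), and mere \emph{faithfulness} of $j^*$ on the heart, which is checked on the $\ell$-adic realisation; essential surjectivity is then obtained from \Cref{GoodRed}, and the statements about $\mathrm{DM}^{0,\mathrm{sm}}_c(S)$ follow by the same dévissage you describe. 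A secondary imprecision in your plan: the middle equivalence $\mathrm{DM}^{0,\mathrm{rig}}(S)\to\mathrm{DN}^{0,\mathrm{rig}}_c(S)$ does not follow from \Cref{thmA} by "lifting the dual", since a priori the dual of a rigid Artin motive need not be an Artin motive, so it cannot be transported through the equivalence of Artin subcategories; in the paper this point is also settled only at the end, by showing every object of $\mathrm{DN}^{0,\mathrm{rig}}_c(S)$ comes from $\mathrm{DM}^{0,\mathrm{sm}}_c(S)$, whose objects are dualisable in $\mathrm{DM}(S)$ for free.
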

\begin{proof}
	We can assume $S$ to be connected.
	By \Cref{thmA} the two functors are fully faithful. Therefore it suffices to show that the composition functor is essentially surjective. By dévissage it suffices to show that any object $M\in\mathrm{DN}^{0,\mathrm{rig}}_c(S)^\heartsuit$ is in the image of $\mathrm{DM}_c^{0,\mathrm{sm}}(S)\to\mathrm{DN}_c^{0,\mathrm{rig}}(S)$. 
	For this, we are going to show that the natural functor 
	$\mathrm{Rep}^A_\Q(\pi^1_{\acute{e}t}(S))\to \mathrm{DN}^{0,\mathrm{rig}}_c(S)^\heartsuit$ is an equivalence of categories. Pick a dense regular open subset $U$ of $S$, with immersion $j:U\to S$. We have the following commutative diagram :
	\begin{equation}
		\begin{tikzcd}
			\mathrm{Rep}^A_\Q(\pi^1_{\acute{e}t}(S)) \ar[r] \ar[d] & \mathrm{DN}^{0,\mathrm{rig}}_c(S)^\heartsuit \ar[d] \\
			\mathrm{Rep}^A_\Q(\pi^1_{\acute{e}t}(U)) \ar[r] & \mathrm{DN}^{0,\mathrm{rig}}_c(U)^\heartsuit
		\end{tikzcd}.
	\end{equation}
	In this diagram, the lower horizontal functor is an equivalence of categories. The left vertical functor is fully faithful because both $U$ and $S$ are normal and $j$ is dominant. The top horizontal functor is fully faithful because the right vertical functor is faithful (it can be check on the $\ell$-adic realisation). Now, the top functor is also essentially surjective hence an equivalence thanks to \Cref{GoodRed}. 
	This shows  $\mathrm{Rep}^A_\Q(\pi^1_{\acute{e}t}(S)) \to \mathrm{DM}_c^{0,\mathrm{sm}}(S)$ is fully faithful and that
	 $\mathrm{Rep}^A_\Q(\pi^1_{\acute{e}t}(S)) \to \mathrm{DN}_c^{0,\mathrm{rig}}(S)^\heartsuit$ is an equivalence of categories. Therefore, $\mathrm{DM}_c^{0,\mathrm{sm}}(S)$ has a t-structure whose heart 
	 is $\mathrm{Rep}^A_\Q(\pi^1_{\acute{e}t}(S))$. Now, if $M\in\mathrm{DM}^{0,\mathrm{rig}}_c(S)$, then $\rho_\mathrm{N}(M)$ has all its cohomology sheaves in $\mathrm{Rep}^A_\Q(\pi^1_{\acute{e}t}(S))$, hence $M\in\mathrm{DM}_c^{0,\mathrm{sm}}(S)$.  This finishes the proof.

\end{proof}

We now apply this computation of $0$-motives to motivic Galois groups.

\begin{nota}
	Let $\sigma : \overline{k}\to\C$ be an embedding. Let $X$ be a finite type $k$-scheme. We denote by $\mcal^{\mathrm{rig}}(X)$ the subcategory of
	$\Mc(X)$ whose objects are the dualisable objects. This means that for $M\in \Mc(X)$, we have $M\in \mcal^{\mathrm{rig}}(X)$ if and only if $M$ is dualisable and its dual is in $\Mc(X)$ (that last condition is automatic).
\end{nota}
\begin{defi}
	\label{univCover}
	Let $X$ be a connected and normal finite type $k$-scheme with a geometric point $\overline{x}:\Spec(\overline{k})\to X$. We define
	$p:\widetilde{X}\to X$ to be the limit of all finite étale $f:Y\to X$ that factor $\overline{x}$. The limit exists as a scheme because the
	transitions morphisms have to be affine. There is a morphism $\pi_{\widetilde{X}} : \widetilde{X}\to\Spec(\overline{k})$ compatible with $p$, which has a section $\widetilde{x}$. Therefore
	there is also a map $q : \widetilde{X}\to X_{\overline{k}}$. By construction, the point $\overline{x}$ factors through $\widetilde{X}$ to give a geometric point $\tilde{x}\colon \Spec(\overline{k})\to\widetilde{X}$.
\end{defi}
\begin{lem}
  The scheme $\widetilde{X}$ is connected.
\end{lem}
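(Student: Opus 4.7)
The plan is to show that any clopen decomposition of $\widetilde{X}$ into two nonempty pieces would descend to such a decomposition at some finite level $Y_i$, contradicting the connectedness of $Y_i$. The argument proceeds in three steps.

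First, I would reduce to the cofinal subdiagram of \emph{connected} pointed finite étale covers. Given any $(Y, y) \to (X, \overline{x})$ in the indexing diagram, the connected component of $Y$ containing $y$ is a clopen subscheme, hence is itself a connected pointed finite étale cover of $(X, \overline{x})$. This subdiagram is still cofiltered: given two objects $(Y_1, y_1)$ and $(Y_2, y_2)$, the connected component of $(y_1, y_2)$ in $Y_1 \times_X Y_2$ is a common refinement. Writing $\widetilde{X} = \varprojlim_i Y_i$ over this cofinal diagram, the transition morphisms $Y_j \to Y_i$ are finite étale maps between nonempty connected schemes, hence surjective.

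Second, I would use the standard fact that for a cofiltered limit of qcqs schemes with affine transitions, the underlying topological space coincides with the cofiltered limit in the category of spectral spaces (as recorded in the Stacks Project chapter on limits of schemes). In particular $\widetilde{X}$ is itself a spectral space, and each projection $p_i \colon \widetilde{X} \to Y_i$ is surjective, being a cofiltered limit of surjective spectral maps between nonempty qcqs spectral spaces.

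Third, suppose for contradiction that $\widetilde{X} = W \sqcup W'$ with $W$ and $W'$ nonempty clopen. Since $\widetilde{X}$ is spectral, both pieces are quasi-compact open. By the descent of quasi-compact opens along cofiltered limits of spectral spaces, there exist an index $i$ and quasi-compact opens $W_i, W_i' \subset Y_i$ with $W = p_i^{-1}(W_i)$ and $W' = p_i^{-1}(W_i')$. The surjectivity of $p_i$ then forces $W_i \sqcup W_i' = Y_i$ with both pieces nonempty, contradicting the connectedness of $Y_i$. The main technical obstacle in this approach is precisely this descent of the clopen decomposition to a finite level; once it is granted, the connectedness of each $Y_i$ in the reduced cofinal system closes the argument immediately.
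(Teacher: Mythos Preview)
Your proof is correct and shares with the paper the key step of descending a clopen of $\widetilde{X}$ to a finite level. The executions differ, however. You first pass to the cofinal subsystem of \emph{connected} pointed covers, establish surjectivity of the projections, and then derive a contradiction with connectedness at a finite level. The paper instead exploits the pointed structure directly: given a clopen $U\subset\widetilde{X}$ not containing the lifted base point $\tilde{x}$, it descends $U$ to a clopen $V\subset Y$ for some $Y\to X$ in the system; the complement $Z=Y\setminus V$ then contains the image of $\overline{x}$, so $Z\to X$ is again a pointed finite étale cover appearing in the indexing diagram, whence $\widetilde{X}\to Y$ factors through $Z$ and $U=p_Y^{-1}(V)=\emptyset$. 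This bypasses both the reduction to connected covers and the surjectivity of the projections. Your route is slightly longer but has the merit of isolating a general principle (a cofiltered limit of nonempty connected spectral spaces along surjective spectral maps is connected), whereas the paper's argument is tailored to the specific shape of the indexing category of pointed covers.
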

\begin{proof}
  If $U\subset \widetilde{X}$ is a clopen subset such that $\overline{x}$ does not factor through $U$, then there exists $Y\to X$ finite étale factorising $\overline{x}$ and a clopen $V$ of $Y$ whose pullback to $\widetilde{X}$ is $U$. Let $Z$ be the open complement of $V$ in $Y$. Then $\overline{x}$ factors through $V$ and $Z\to X$ is finite étale, so that $\widetilde{X}=\lim_i Y_i$ with $Y_i\to Z$ finite étale factoring $\overline{x}$, and we see that $U=\emptyset$.
\end{proof}
\begin{rem}
	We defined $\widetilde{X}$ for a normal scheme $X$ because of the application we have in mind. Of course the definition makes sense for any qcqs scheme. Although the author did not check the details, it is probable that in the normal case the scheme $\widetilde{X}$ is the normalisation in $\Spec(L)$ of $X$, where $L$ is the maximal algebraic extension of $k(X)$ which is unramified over $X$.
\end{rem}
Base change gives a monoidal functor $p^* : \mathrm{DN}_c(X)\to\mathrm{DN}_c(\widetilde{X})$ which sends $\mcal^\mathrm{rig}(X)$ to $\mcal^\mathrm{rig}(\widetilde{X})$.
\begin{lem}
	Let $x:\Spec(\overline{k})\to X$ be a geometric point of a connected finite type $k$ scheme. Let $\omega_x : \mcal^\mathrm{rig}(X)\to\mathrm{Vect}_\Q$ be the composition of $x^*$ with the Betti realisation (through $\sigma$).
	Then $\mcal^\mathrm{rig}(X)$ is a Tannakian category with fiber functor $\omega_x$.
\end{lem}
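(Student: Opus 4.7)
The strategy is to verify the hypotheses of Deligne's characterisation of neutral Tannakian categories: a $\Q$-linear rigid abelian symmetric monoidal category $\ccal$ with $\mathrm{End}_\ccal(\un) = \Q$ that admits an exact, faithful, $\Q$-linear symmetric monoidal functor to $\mathrm{Vect}_\Q$ is neutral Tannakian.

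First I would assemble the rigid abelian symmetric monoidal structure on $\mcal^\mathrm{rig}(X)$. The ambient $\Q$-linear abelian category $\Mc(X)$ is symmetric monoidal, and the tensor product on $\mathrm{DN}_c(X)$ is t-exact in both variables. This is the same ingredient used above to see that the ordinary t-structure restricts to $\mathrm{DN}^{0,\mathrm{rig}}(X)$, and the same reasoning shows it restricts to the full subcategory $\mathrm{DN}^\mathrm{rig}_c(X)$ of dualisable objects, whose heart is by definition $\mcal^\mathrm{rig}(X)$. Hence kernels, cokernels, tensor products and duals of rigid objects in $\Mc(X)$ are again rigid, so $\mcal^\mathrm{rig}(X)$ is a rigid abelian symmetric monoidal $\Q$-linear category. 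One has moreover $\mathrm{End}(\un) = \Hom_{\mathrm{DN}(X)}(\Q_X,\Q_X) = \Q$: applying the $\ell$-adic realisation injects this ring into $\HH^0(X_\et,\Q_\ell) = \Q_\ell$ by connectedness of $X$.

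Second, $\omega_x$ is $\Q$-linear and symmetric monoidal as a composition of such. The pullback $x^*\colon\mathrm{DN}_c(X)\to \mathrm{DN}_c(\bar k)$ is t-exact by the very definition of the ordinary t-structure and preserves rigid objects; the Betti realisation $\Mc(\bar k) \to \mathrm{Vect}_\Q$ associated to $\sigma$ is exact, faithful and monoidal by the very construction of Nori motives, and sends rigid objects to finite-dimensional $\Q$-vector spaces. Thus $\omega_x$ is exact, $\Q$-linear, and symmetric monoidal with values in finite-dimensional vector spaces.

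The main remaining point is faithfulness of $\omega_x$, which for a $\Q$-linear exact functor between $\Q$-linear abelian categories amounts to conservativity. Let $M\in\mcal^\mathrm{rig}(X)$ with $\omega_x(M)=0$. Faithfulness of the Betti realisation on $\Mc(\bar k)$ already forces $x^*M = 0$. Now the $\ell$-adic realisation of $M$, being the image of a rigid object under a symmetric monoidal functor, is a dualisable constructible $\Q_\ell$-sheaf on $X$, hence a lisse $\Q_\ell$-sheaf. Its geometric stalk at $x$ is the $\ell$-adic realisation of $x^*M$, which vanishes; since $X$ is connected, a lisse sheaf with a zero geometric stalk is itself zero. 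Conservativity of the $\ell$-adic realisation on $\mathrm{DN}_c(X)$ then gives $M=0$, and Deligne's theorem allows us to conclude that $\mcal^\mathrm{rig}(X)$ is neutral Tannakian with fiber functor $\omega_x$.
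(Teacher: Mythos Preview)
Your proof is correct and takes essentially the same approach as the paper—verify the axioms of a neutral Tannakian category—only in far more detail. The paper's one-line proof leaves faithfulness of $\omega_x$ implicit (it follows from the general fact that an exact tensor functor out of a rigid abelian $\otimes$-category with $\mathrm{End}(\un)$ a field is automatically faithful), whereas you supply a direct argument via the $\ell$-adic realisation and connectedness of $X$.
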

\begin{proof}
	Every object of $\mcal(X)$ is dualsiable, the endomorphisms of the unit objects are one dimensional and the functor $\omega_x$ is monoidal.
\end{proof}
\begin{lem}
	Let $X$ and $\widetilde{X}$ as in \Cref{univCover}. The category $\mcal^\mathrm{rig}(\widetilde{X})$ together with the fiber functor 
	\[\omega_{\tilde{x}}\colon \mcal^\mathrm{rig}(\widetilde{X})\to \mathrm{Vect}_\Q\] is neutral Tannakian.
\end{lem}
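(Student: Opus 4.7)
The plan is to verify the standard axioms of a neutral Tannakian category: a rigid abelian $\Q$-linear tensor category with $\End(\un) = \Q$ admitting a $\Q$-valued exact faithful tensor fiber functor. The previous lemma already gives the analogous statement for connected finite type $k$-schemes, and we want to transfer this to the pro-scheme $\widetilde{X}$.

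The first step is to argue that $\mcal^{\mathrm{rig}}(\widetilde{X})$ is a well-defined rigid abelian $\Q$-linear tensor category. This follows by continuity: writing $\widetilde{X}=\lim_i Y_i$ as a cofiltered limit with affine transitions over the finite étale covers $Y_i\to X$ factoring $\overline{x}$, the continuity proposition earlier in the paper gives $\mathrm{DN}_c(\widetilde{X})\simeq \mathrm{colim}_i \mathrm{DN}_c(Y_i)$. Since the ordinary t-structure and the notion of dualisability are preserved by the transition functors $p_{ij}^*$, one obtains $\mcal^{\mathrm{rig}}(\widetilde{X})\simeq \mathrm{colim}_i \mcal^{\mathrm{rig}}(Y_i)$ as a filtered colimit of abelian rigid tensor categories, hence is itself such a category.

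The second step is to compute $\End(\un_{\widetilde{X}})$. Here one uses the connectedness of $\widetilde{X}$ established in the preceding lemma, together with the fact that $\End_{\mathrm{DN}(\widetilde{X})}(\un) \simeq \colim_i \End_{\mathrm{DN}(Y_i)}(\un)$ and each $Y_i$ is finite étale over $X$ hence normal and connected (after choosing a connected component through $\overline{x}$), so $\End_{\mathrm{DN}(Y_i)}(\un) = \Q$ by the previous lemma. A filtered colimit of copies of $\Q$ along identity maps is $\Q$, so $\End(\un_{\widetilde{X}}) = \Q$.

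The third step is to check that $\omega_{\tilde{x}}$ is an exact faithful $\Q$-linear tensor fiber functor with values in $\mathrm{Vect}_\Q$. It factors as $\tilde{x}^*$ followed by Betti realisation through $\sigma$; monoidality is clear from the six-functor formalism. For exactness and faithfulness, note that any object $M\in\mcal^{\mathrm{rig}}(\widetilde{X})$ comes, by continuity, from some $M_i\in\mcal^{\mathrm{rig}}(Y_i)$, and $\omega_{\tilde{x}}(M)\simeq \omega_{x_i}(M_i)$ where $x_i\colon\Spec(\overline{k})\to Y_i$ is the induced geometric point. The fiber functor on each $Y_i$ is a fiber functor by the preceding lemma, and exactness/faithfulness pass to the filtered colimit.

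The main obstacle will be the technical bookkeeping around continuity for the heart and for rigid objects. The key input is that dualisability data is a finite datum (as used in the continuity proposition for $\Dd^{0,\mathrm{rig}}$ earlier), so rigid objects and their duals descend to finite levels $Y_i$, and exactness of the fiber functor on the heart can be checked after restricting to such a level, where the previous lemma applies directly.
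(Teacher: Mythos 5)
Your proof is correct and follows essentially the same route as the paper: write $\mcal^{\mathrm{rig}}(\widetilde{X})$ as the filtered colimit of the $\mcal^{\mathrm{rig}}(Y)$ over finite étale covers $Y\to X$ factoring $\overline{x}$, use connectedness of $\widetilde{X}$ to get $\End(\un)=\Q$, and neutralise with the fiber functor at $\tilde{x}$. The paper's proof is just a terser version of exactly this argument, so no further comparison is needed.
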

\begin{proof}
	The category $\mcal^\mathrm{rig}(\widetilde{X})$ is the colimit of all $\mcal^\mathrm{rig}(Y)$ with $Y$ finite étale over $X$. Hence all of its objects are dualisable. As $\widetilde{X}$ is connected, the category $\mcal^\mathrm{rig}(\widetilde{X})$ is Tannakian. It is neutralised by $\tilde{x}$.
\end{proof}
\begin{nota}
	Let $X$ be a connected normal finite type $k$-scheme with a geometric point $x$. We denote by $\mathrm{G}_\mathrm{mot}(X,x,\sigma)$ the Tannakian group of $\mcal^\mathrm{rig}(X)$ and by $\mathrm{G}_\mathrm{mot}(\widetilde{X},\tilde{x},\sigma)$ the Tannakian group of $\mcal^\mathrm{rig}(\widetilde{X})$
\end{nota}

Let $s:X_{\overline{k}}\to X$ be the projection.

\begin{prop}
	\label{exactDiagMotGroup}
	Let $X$ be a normal connected finite type $k$-scheme. The commutative diagram
	\begin{equation}
		\begin{tikzcd}
			{\mathrm{DN}^{0,\mathrm{rig}}(k)^\heartsuit} & {\mathrm{DN}^{0,\mathrm{rig}}(X)^\heartsuit} & {\mathrm{DN}^{0,\mathrm{rig}}(X_{\overline{k}})^\heartsuit} \\
			{\mathrm{DN}^{0,\mathrm{rig}}(k)^\heartsuit} & {\mcal^\mathrm{rig}(X)} & {\mcal^\mathrm{rig}(X_{\overline{k}})} \\
			{\mathrm{DN}^{0,\mathrm{rig}}(\overline{k})^\heartsuit}=\mathrm{Vect}_\Q \ar[r,"\pi_{\widetilde{X}}^*"]& {\mcal^\mathrm{rig}(\widetilde{X})} & {\mcal^\mathrm{rig}(\widetilde{X})}
			\arrow[from=1-2, to=2-2]
			\arrow["{s^*}", from=1-3, to=2-3]
			\arrow["{p^*}", from=2-2, to=3-2]
			\arrow["{q^*}", from=2-3, to=3-3]
			\arrow["{s^*}", from=2-2, to=2-3]
			\arrow[from=3-2, to=3-3,"\mathrm{Id}"]
			\arrow["{\pi_X^*}", from=1-1, to=1-2]
			\arrow[from=1-2, to=1-3]
			\arrow[from=1-1, to=2-1,"\mathrm{Id}"]
			\arrow[from=2-1, to=2-2]
			\arrow[from=2-1, to=3-1]
			\arrow[from=3-1, to=3-2]
			\arrow["{\pi_X^*}", from=2-1, to=2-2]
		\end{tikzcd}
	\end{equation}
	of categories gives by taking Tannakian duals in the following commutative diagram
	\begin{equation}
		\begin{tikzcd}
			& 0 & 0 & {} \\
			0 & { \mathrm{G}_\mathrm{mot}(\widetilde{X},\widetilde{x},\sigma)} & { \mathrm{G}_\mathrm{mot}(\widetilde{X},\widetilde{x},\sigma)} & 0 \\
			0 & { \mathrm{G}_\mathrm{mot}(X_{\overline{k}},\overline{x},\sigma)} & { \mathrm{G}_\mathrm{mot}(X,x,\sigma)} & {\mathrm{Gal}(\overline{k}/k)} & 0 \\
			0 & {\pi_1^{\acute{e}t}(X_{\overline{k}},\overline{x})} & {\pi_1^{\acute{e}t}(X,x)} & {\mathrm{Gal}(\overline{k}/k)} & 0 \\
			& 0 & 0 & 0
			\arrow[from=2-2, to=2-3]
			\arrow[from=2-3, to=2-4]
			\arrow[from=2-1, to=2-2]
			\arrow[from=1-2, to=2-2]
			\arrow[from=1-3, to=2-3]
			\arrow[from=3-1, to=3-2]
			\arrow[from=2-2, to=3-2]
			\arrow[from=2-3, to=3-3]
			\arrow[from=3-2, to=3-3]
			\arrow[from=4-2, to=4-3]
			\arrow[from=4-1, to=4-2]
			\arrow[from=3-2, to=4-2]
			\arrow[from=3-3, to=4-3]
			\arrow[from=4-3, to=4-4]
			\arrow[from=4-4, to=4-5]
			\arrow[from=3-3, to=3-4]
			\arrow[from=3-4, to=3-5]
			\arrow[from=2-4, to=3-4]
			\arrow[from=3-4, to=4-4]
			\arrow[from=4-4, to=5-4]
			\arrow[from=4-2, to=5-2]
			\arrow[from=4-3, to=5-3]
		\end{tikzcd}
	\end{equation}
	in which rows and columns are exact.
\end{prop}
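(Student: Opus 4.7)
The plan is to apply Tannakian duality to the diagram of neutral Tannakian categories and to verify that each row and column of the dual diagram of pro-algebraic $\Q$-groups gives a short exact sequence. The relevant tool is the Tannakian exact-sequence criteria of Deligne--Milne, as sharpened by Esnault--Hai--Sun, which translate exactness of $1 \to K \to G \to Q \to 1$ into properties of the corresponding tensor functors $\mathrm{Rep}(Q) \to \mathrm{Rep}(G) \to \mathrm{Rep}(K)$: full faithfulness of $\mathrm{Rep}(Q) \to \mathrm{Rep}(G)$ with essential image closed under subobjects; every object of $\mathrm{Rep}(K)$ being a subquotient of the restriction of some object of $\mathrm{Rep}(G)$; and the image of $\mathrm{Rep}(Q)$ coinciding with the full subcategory of objects of $\mathrm{Rep}(G)$ that become trivial in $\mathrm{Rep}(K)$.

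I would first identify the dual morphisms. The base change functors $s^*$, $p^*$, $q^*$ dualise to the evident maps between motivic Galois groups, and $\pi_X^*$, $\pi_{\widetilde{X}}^*$ to the structural maps to $\mathrm{Gal}(\overline{k}/k)$ and to the trivial group. The inclusion $\mathrm{DN}^{0,\mathrm{rig}}(X)^\heartsuit \hookrightarrow \mcal^\mathrm{rig}(X)$, combined with the preceding corollary's identification $\mathrm{DN}^{0,\mathrm{rig}}(X)^\heartsuit \simeq \mathrm{Rep}^A_\Q(\pi_1^{\acute{e}t}(X,x))$, dualises to the comparison map $\mathrm{G}_\mathrm{mot}(X,x,\sigma) \to \pi_1^{\acute{e}t}(X,x)$, and likewise for $X_{\overline{k}}$. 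The bottom row of the group diagram is then Grothendieck's short exact sequence of étale fundamental groups (\cite{SGA1}, Exposé IX), valid because $X$ is geometrically connected (equivalently, $\widetilde{X}$ is connected, which we have already shown).

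For each row and each non-trivial column I would verify the three Esnault--Hai--Sun criteria. Take the middle column $1 \to \mathrm{G}_\mathrm{mot}(\widetilde{X}) \to \mathrm{G}_\mathrm{mot}(X) \to \pi_1^{\acute{e}t}(X,x) \to 1$: criterion (a) requires that $\mathrm{DN}^{0,\mathrm{rig}}(X)^\heartsuit \hookrightarrow \mcal^\mathrm{rig}(X)$ be fully faithful with image closed under subobjects, which follows from semisimplicity of the weight-zero pure subcategory of $\mcal^\mathrm{rig}(X)$; criterion (b) requires every object of $\mcal^\mathrm{rig}(\widetilde{X})$ to be a subquotient of some $p^*M$, which follows from the continuity of $\mathrm{DN}$ applied to the cofiltered affine limit $\widetilde{X} = \lim_i Y_i$; criterion (c) requires $p^*M$ to be a trivial motive exactly when $M$ is Artin. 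The same strategy handles the remaining column and the middle row, the latter with the Artin subcategory replaced by the essential image of $\pi_X^* \colon \mathrm{DN}^{0,\mathrm{rig}}(k)^\heartsuit \to \mcal^\mathrm{rig}(X)$ (again semisimple, this time by Artin--Nori over a field).

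The main obstacle is criterion (c) in each case. For the column, one must show that $p^*M \simeq \un_{\widetilde{X}}^{\oplus r}$ forces $M$ to be Artin: via $\omega_x$ the object $M$ defines a continuous $\pi_1^{\acute{e}t}(X,x)$-representation on a finite-dimensional $\Q$-vector space, and triviality along $p^*$ combined with the description of $\widetilde{X}$ as the limit of finite étale covers factoring $\overline{x}$ forces this representation to factor through a finite quotient, whence $M \in \mathrm{DN}^{0,\mathrm{rig}}(X)^\heartsuit$ by the preceding corollary. The symmetric verification for the middle row, namely that $s^*M$ trivialised on $X_{\overline{k}}$ forces $M$ to be pulled back from $\Spec k$, is handled by combining this argument with the exactness of the bottom row and a descent along the Galois cover $\Spec\overline{k} \to \Spec k$. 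Once each row and column has been shown to be individually exact, commutativity passes from the diagram of categories to the diagram of groups, and a standard $3\times 3$ diagram chase yields the claimed exactness.
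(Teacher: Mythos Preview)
Your overall strategy---applying the Deligne--Milne/Esnault--Hai--Sun criteria to each row and column---is exactly what the paper does. But your verification of criterion~(c) for the middle column contains a circularity. You write that ``via $\omega_x$ the object $M$ defines a continuous $\pi_1^{\acute{e}t}(X,x)$-representation on a finite-dimensional $\Q$-vector space''. It does not: the fiber functor $\omega_x$ equips $\omega_x(M)$ with an action of $\mathrm{G}_\mathrm{mot}(X,x,\sigma)$, and this factors through $\pi_1^{\acute{e}t}(X,x)$ precisely when $M$ is Artin, which is what you are trying to prove. Knowing that $p^*M$ is trivial on $\widetilde{X}$ tells you, by continuity, that $f^*M$ is trivial on some finite \'etale cover $f\colon Y\to X$; but from there you still need a mechanism to descend this to the statement that $M$ itself is Artin over $X$. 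The paper supplies this mechanism concretely: since $f$ is finite \'etale, $M$ is a direct factor of $f_*f^*M$ (via the trace splitting $M\to f_*f^*M=f_!f^!M\to M$, whose composite is multiplication by $\deg f$), and $f_*f^*M$ lies in the thick subcategory generated by $f_*\Q_Y$, hence is a smooth Artin motive. This is the missing step in your argument.

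Your treatment of criterion~(a) is also too quick. Saying that closure under subobjects ``follows from semisimplicity of the weight-zero pure subcategory of $\mcal^\mathrm{rig}(X)$'' conflates two things: over a general base $X$ there is no weight filtration on $\mcal^\mathrm{rig}(X)$ of the kind you invoke, and even granting one, weight-zero pure does not characterise Artin. The paper instead uses the pointwise criterion (\Cref{stabnmot}): to test whether a subobject $M\hookrightarrow N$ with $N$ Artin is itself Artin, it suffices to check at each point $x\in X$, where one is over a field and can invoke \cite[Theorem 9.1.16]{MR3618276}. Finally, the paper does not close with a $3\times3$ chase; it verifies each sequence directly (noting that the arguments for the remaining row and column are formally identical to the one spelled out), and the bottom row is quoted from \cite{SGA1}.
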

\begin{proof}
	Taking Tannakian duals indeed give the commutative diagram of pro-algebraic $\Q$-groups by \Cref{pi1etale}.

	The facts that the second maps in each column and each row is faithfully flat is equivalent to the fact that the maps between
	Tannakian categories are fully faithful and the image is closed under sub-quotients. Therefore it suffices to check that 
	$\mathrm{Rep}^A_\Q(\mathrm{Gal}(\overline{k}/k))\to\mathrm{Rep}_\Q^A(\pi_{\acute{e}t}^1(X))$ and $\mathrm{Rep}_\Q^A(\pi_{\acute{e}t}^1(S))\to \mcal^{\mathrm{rig}}(X)$ are fully faithful
	with image closed under sub-quotients. For the first map, this is \cite{SGA1}. The second is fully faithful, we have to check that it is closed under 
	sub-quotients. Working dually it suffices to check that the category $\mathrm{Rep}_\Q^A(\pi_{\acute{e}t}^1(X))$ is stable under subobjects in $\mcal^{\mathrm{rig}}(X)$. 
	Let $M\to N$ be a monomorphism in $\mcal^{\mathrm{rig}}(X)$ with $N$ a $0$-motive. By \Cref{stabnmot} to show that $M$ is a $0$-motive it suffices to check that for all $x\in X$ the 
	motive $M_x$ is a $0$-motive. But by \cite[Theorem 9.1.16]{MR3618276} the category of $0$-motives is closed under sub-quotients in the categories of motives over a field.

	We prove the middle exactitude and injectivity at the same time. As the proofs are the same, we only deal with the sequence
	\begin{equation}
		\mathrm{DN}^{0,\mathrm{sm}\heartsuit}(X)\to\mcal^\mathrm{rig}(X)\to \mcal^\mathrm{rig}(\widetilde{X})
	.\end{equation}
	As in the proof of \cite[Theorem 9.1.16 and Erratum p. 234]{MR3618276}, we first prove that any motive $M\in \mcal^\mathrm{rig}(\widetilde{X})$ is a direct
	factor of the image of a motive $N \in \mcal^\mathrm{rig}(X)$. Indeed, if $M\in \mcal^\mathrm{rig}(\widetilde{X})$, there is a finite étale morphism $f:Y\to X$ (that we can assume to be Galois) such that
	$M$ is defined over $Y$ that is $M \in \mcal^\mathrm{rig}(Y)$. But as the composition $M\to f^*f_*M = f^!f_!M\to M$ is the multiplication by $\deg f$, $M$ is a direct factor of $f^*N$ with $N = f_*N \in \mcal^\mathrm{rig}(X)$.

	Let $\mathcal{U}$ be the Tannakian full subcategory of $\mcal^\mathrm{rig}(\widetilde{X})$ generated by the unit object $\Q$. We say that elements of $\mathcal{U}$ are \emph{trivial}. To finish the proof
	we just have to prove that $M\in\mcal^{\mathrm{rig}}(X)$ has trivial image in $\mcal^\mathrm{rig}(\widetilde{X})$ if and only if it is in $\mathrm{DN}^{0,\mathrm{sm}\heartsuit}(X)$.
	Of course, as any representation of $\pi_1^{\acute{e}t}(X)$ is trivialised after a base change to a finite étale cover of $X$, the image of Artin motives in $\mcal^\mathrm{rig}(\widetilde{X})$ is trivial.
	Conversely, let $M\in\mcal^\mathrm{rig}(X)$ with trivial image in $\mcal^\mathrm{rig}(\widetilde{X})$. This means that there is a finite étale morphism $f:Y\to X$ such that $f^*M$ is trivial.
	We can assume that $f$ is Galois. We have that $f_*f^*M$ is in the category generated by $f_*\Q_Y$, which is inside $\mathrm{DN}^{0,\mathrm{sm}\heartsuit}(X)$. As $M$ is a direct factor of $f_*f^*M$, we get that $M$ is an Artin motive.

\end{proof}
\subsection*{Acknowledgements} I would like to thank my advisor Sophie Morel for many discussions and remarks. I also thank Raphaël Ruimy for comments on drafts of this note and Frédéric Déglise for helpful discussions. 
The author was funded by the ÉNS de Lyon and by the ANR-21-CE40-0015 project HQDIAG.
\bibliographystyle{alpha}
\bibliography{BibSCNet}

\begin{thebibliography}{EHIK21}

\bibitem[ABV09]{MR2494373}
Joseph Ayoub and Luca Barbieri-Viale.
\newblock 1-motivic sheaves and the {A}lbanese functor.
\newblock {\em J. Pure Appl. Algebra}, 213(5):809--839, 2009.

\bibitem[ABV15]{MR3302623}
Joseph Ayoub and Luca Barbieri-Viale.
\newblock Nori 1-motives.
\newblock {\em Math. Ann.}, 361(1-2):367--402, 2015.

\bibitem[Aut18]{stacks-project}
The Stacks~Project Authors.
\newblock Stacks project, 2018.
\newblock \url{https://stacks.math.columbia.edu/}.

\bibitem[Ayo14]{MR3205601}
Joseph Ayoub.
\newblock La r\'{e}alisation \'{e}tale et les op\'{e}rations de {G}rothendieck.
\newblock {\em Ann. Sci. \'{E}c. Norm. Sup\'{e}r. (4)}, 47(1):1--145, 2014.

\bibitem[CD16]{MR3477640}
Denis-Charles Cisinski and Fr\'{e}d\'{e}ric D\'{e}glise.
\newblock \'{E}tale motives.
\newblock {\em Compos. Math.}, 152(3):556--666, 2016.

\bibitem[CDN23]{MR4631432}
M.~Cavicchi, F.~D\'eglise, and J.~Nagel.
\newblock Motivic decompositions of families with {T}ate fibers: smooth and
  singular cases.
\newblock {\em Int. Math. Res. Not. IMRN}, (16):14239--14289, 2023.

\bibitem[EHIK21]{MR4319065}
Elden Elmanto, Marc Hoyois, Ryomei Iwasa, and Shane Kelly.
\newblock Milnor excision for motivic spectra.
\newblock {\em J. Reine Angew. Math.}, 779:223--235, 2021.

\bibitem[Gro61]{MR0217085}
A.~Grothendieck.
\newblock \'{E}l\'{e}ments de g\'{e}om\'{e}trie alg\'{e}brique. {III}.
  \'{E}tude cohomologique des faisceaux coh\'{e}rents. {I}.
\newblock {\em Inst. Hautes \'{E}tudes Sci. Publ. Math.}, (11):167, 1961.

\bibitem[Haa19]{HaasThesis}
Johann Haas.
\newblock {\em Lisse $1$-motives}.
\newblock PhD thesis, The University of Regensburg, 2019.
\newblock available at
  \url{https://epub.uni-regensburg.de/43953/1/Thesis%20for%20bib.pdf}.

\bibitem[HMS17]{MR3618276}
Annette Huber and Stefan M\"{u}ller-Stach.
\newblock {\em Periods and {N}ori motives}, volume~65 of {\em Ergebnisse der
  Mathematik und ihrer Grenzgebiete. 3. Folge. A Series of Modern Surveys in
  Mathematics [Results in Mathematics and Related Areas. 3rd Series. A Series
  of Modern Surveys in Mathematics]}.
\newblock Springer, Cham, 2017.
\newblock With contributions by Benjamin Friedrich and Jonas von Wangenheim.

\bibitem[IM22]{ivorraFourOperationsPerverse2022}
Florian Ivorra and Sophie Morel.
\newblock The four operations on perverse motives.
\newblock Preprint, available at
  \url{http://perso.ens-lyon.fr/sophie.morel/PerverseMotives.pdf}, 2022.

\bibitem[Mor08]{MR2350050}
Sophie Morel.
\newblock Complexes pond\'{e}r\'{e}s sur les compactifications de
  {B}aily-{B}orel: le cas des vari\'{e}t\'{e}s de {S}iegel.
\newblock {\em J. Amer. Math. Soc.}, 21(1):23--61, 2008.

\bibitem[NV15]{MR3293216}
Arvind~N. Nair and Vaibhav Vaish.
\newblock Weightless cohomology of algebraic varieties.
\newblock {\em J. Algebra}, 424:147--189, 2015.

\bibitem[Org04]{MR2102056}
Fabrice Orgogozo.
\newblock Isomotifs de dimension inf\'{e}rieure ou \'{e}gale \`a un.
\newblock {\em Manuscripta Math.}, 115(3):339--360, 2004.

\bibitem[PL19a]{MR4033829}
Simon Pepin~Lehalleur.
\newblock Constructible 1-motives and exactness of realisation functors.
\newblock {\em Doc. Math.}, 24:1721--1737, 2019.

\bibitem[PL19b]{MR3920833}
Simon Pepin~Lehalleur.
\newblock Triangulated categories of relative 1-motives.
\newblock {\em Adv. Math.}, 347:473--596, 2019.

\bibitem[Rob14]{robaloThese}
Marco Robalo.
\newblock {\em Théorie homotopique motivique des espaces non-commutatifs.}
\newblock PhD thesis, University of Montpellier, 2014.

\bibitem[RT24]{integralNori}
Raphaël Ruimy and Swann Tubach.
\newblock Nori motives (and mixed hodge modules) with integral coefficients.
\newblock Preprint, available at \url{https://swann.tubach.fr/en/research/},
  2024.

\bibitem[Rui23]{ruimyAbelianCategoriesArtin2023}
Raphaël Ruimy.
\newblock {\em Abelian Categories of {{Artin}} étale {{Motives}} with Integral
  Coefficients}.
\newblock PhD thesis, ENS de Lyon, 20223.
\newblock available at \url{https://raphaelruimy2.wixsite.com/rruimy/research}.

\bibitem[Rui23]{ruimyArtinPerverseSheaves2023}
Raphaël Ruimy.
\newblock Artin perverse sheaves, 2023.
\newblock Preprint, available at \url{http://arxiv.org/abs/2205.07796}.

\bibitem[Sai90]{MR1047415}
Morihiko Saito.
\newblock Mixed {H}odge modules.
\newblock {\em Publ. Res. Inst. Math. Sci.}, 26(2):221--333, 1990.

\bibitem[SGA71]{SGA1}
{\em Rev\^{e}tements \'{e}tales et groupe fondamental.}
\newblock Springer-Verlag, Berlin-New York,,, 1971.
\newblock S\'{e}minaire de G\'{e}om\'{e}trie Alg\'{e}brique du Bois Marie
  1960--1961 (SGA 1)., Dirig\'{e} par Alexandre Grothendieck. Augment\'{e} de
  deux expos\'{e}s de M. Raynaud.

\bibitem[Ter24]{terenziTensorStructurePerverse2024}
Luca Terenzi.
\newblock Tensor structure on perverse {Nori} motives, 2024.
\newblock Available at \url{http://arxiv.org/abs/2401.13547}.

\bibitem[Tub23]{SwannRealisation}
Swann Tubach.
\newblock On the {{Nori}} and {{Hodge}} realisations of {{Voevodsky}} étale
  motives.
\newblock Preprint, available at \url{https://swann.tubach.fr/en/research/},
  2023.

\bibitem[Vai20]{MR4109490}
Vaibhav Vaish.
\newblock Punctual gluing of {$t$}-structures and weight structures.
\newblock {\em Manuscripta Math.}, 162(3-4):341--366, 2020.

\bibitem[VSF00]{MR1764197}
Vladimir Voevodsky, Andrei Suslin, and Eric~M. Friedlander.
\newblock {\em Cycles, transfers, and motivic homology theories}, volume 143 of
  {\em Annals of Mathematics Studies}.
\newblock Princeton University Press, Princeton, NJ, 2000.

\end{thebibliography}

Swann Tubach \url{swann.tubach@ens-lyon.fr}

E.N.S Lyon, UMPA, 46 Allée d'Italie, 

69364 Lyon Cedex 07, France
\end{document}